\numberwithin{equation}{section}
\renewcommand{\eqref}[1]{(\ref{#1})}   
\theoremstyle{plain}
\newtheorem{theorem}{Theorem}[section]
\newtheorem{corollary}[theorem]{Corollary}
\newtheorem{lemma}[theorem]{Lemma}
\newtheorem{proposition}[theorem]{Proposition}
\theoremstyle{definition}
\theoremstyle{remark} 
\newtheorem{remark}[theorem]{Remark}
\newcommand{\Fp}{\mathbb{F}_p}
\newcommand{\Fpt}{\mathbb{F}^\times_p}
\newcommand{\Cc}{\mathbb{C}}
\newcommand{\Rr}{\mathbb{R}}
\newcommand{\Zz}{\mathbb{Z}}
\newcommand{\LL}{\mathcal{L}}
\newcommand{\mcL}{\mathcal{L}}
\newcommand{\tA}{\mathsf{A}} 
\newcommand{\tB}{\mathsf{B}}
\newcommand{\tC}{\mathsf{C}} 
\newcommand{\tD}{\mathsf{D}}
\DeclareMathOperator{\cond}{cond}
\newcommand{\sheaf}[1]{\mathcal{{#1}}}
\newcommand{\mods}[1]{\,(\mathrm{mod}\,{#1})}
\newcommand{\ov}[1]{\overline{#1}}
\newcommand{\lra}{\longrightarrow}
\newcommand{\eps}{\varepsilon}
\DeclareMathOperator{\hypk}{\mathrm{Kl}}
\newcommand{\bessel}[1]{\widecheck{{#1}}}
\newcommand{\bbessel}[1]{\widetilde{{#1}}}
\newcommand{\uple}[1]{\text{\boldmath${#1}$}}
\newcommand\sumsum{\mathop{\sum\sum}\limits}
\newcommand\sumtriple{\mathop{\sum\ \sum\ \sum}\limits}
\def\stacksum#1#2{{\stackrel{{\scriptstyle #1}}
{{\scriptstyle #2}}}}
\def\wihat#1{\widehat{#1}}
\newcommand{\fourier}[1]{\widehat{{#1}}}
\begin{document}

\title[Exponent of distribution of $d_3$]{On the exponent of
  distribution of the ternary divisor function}

\date{\today}

\author[\'E. Fouvry]{\'Etienne Fouvry}
\address{Universit\'e Paris Sud, Laboratoire de Math\'ematique\\
  Campus d'Orsay\\ 91405 Orsay Cedex\\France}
\email{etienne.fouvry@math.u-psud.fr}

\author[E. Kowalski]{Emmanuel Kowalski}
\address{ETH Z\"urich -- D-MATH\\
  R\"amistrasse 101\\
  CH-8092 Z\"urich\\
  Switzerland} \email{kowalski@math.ethz.ch}

\author[Ph. Michel]{Philippe Michel}
\address{EPFL/SB/IMB/TAN, Station 8, CH-1015 Lausanne, Switzerland }
\email{philippe.michel@epfl.ch}

\thanks{Ph. M. was partially supported by the SNF (grant
  200021-137488) and the ERC (Advanced Research Grant 228304);
  \'E. F. thanks ETH Z\"urich, EPF Lausanne and the Institut
  Universitaire de France for financial support. }

\subjclass[2010]{11N25, 11N37, 11L05, 11T23}

\keywords{Ternary divisor function, arithmetic progressions, exponent
  of distribution, Voronoi formula, exponential sums over finite
  fields, trace functions, Kloostermania}

\begin{abstract}
  We show that the exponent of distribution of the ternary divisor
  function $d_3$ in arithmetic progressions to prime moduli is at
  least $1/2+1/46$, improving results of Heath-Brown and
  Friedlander--Iwaniec. Furthermore, when averaging over a fixed
  residue class, we prove that this exponent is increased to $1/2
  +1/34$.
\end{abstract}

\maketitle

\section{Introduction and statement of the main results}

For any positive integer $k\geq 1$, we denote by $d_{k}$ the $k$--fold
divisor function: for $n$ a positive integer, $d_{k}(n)$ is the number
of solutions of the equation
$$
n=n_{1}\dots n_{k},
$$
where the $n_{i}$ are positive integers. The purpose of this
paper is to investigate the exponent of distribution of the ternary
divisor function $d_{3}$ in arithmetic progressions. More generally,
we will say that a real number $\Theta>0$ is an \emph{exponent of
  distribution for $d_k$ restricted to a set $\mathcal{Q}$ of moduli}
if, for any $\eps>0$, for any $q\in\mathcal{Q}$ with $q\leq
x^{\Theta-\eps}$ and any residue class $a\bmod{q}$ with $(a,q)=1$, we
have a uniform asymptotic formula
\begin{equation}\label{eq-ued}
\sum_{\substack{n\equiv a\bmod q  \\n\leq x}} d_{k}(n)=
\frac{1}{\varphi (q)} \sum_{\substack{(n,q)=1 \\n\leq x}} d_{k}(n)
+O\Bigl(\frac{x}{q(\log x)^A}\Bigr)
\end{equation}
for any $A>0$ and $x\geq 2$, the implied constant depending on $A$ and
$\eps$ only. If $\mathcal{Q}$ contains all positive integers, we speak
only of \emph{exponent of distribution}.
\par
It is widely believed $\Theta=1$ is an exponent of distribution for
all $k$. This fact, if true, has deep consequences on our
understanding of the distribution of primes in arithmetic progressions
to very large moduli, going beyond the direct reach of the Generalized
Riemann Hypothesis. It is therefore not surprising that this problem
has been studied extensively, and that it is especially relevant to
obtain an exponent of distribution $\Theta>1/2$, since this goes
beyond the techniques involving the Bombieri--Vinogradov Theorem.
\par
As a consequence of the combinatorial structure of $d_k$ (essentially
by Dirichlet's hyperbola method in dimension $k$), one instantly
deduces that $\Theta =1/k$ is an exponent of distribution for $d_k$,
in particular $\Theta=1$ for $k=1$. It was noted by Linnik and Selberg
that for $k=2$ (the classical divisor function), a fairly direct
application of Weil's bound for Kloosterman sums yields $\Theta=2/3$.
\par
The only other case for which an exponent of distribution greater than
$1/2$ is known is for $d_3$: in their groundbreaking paper,
Friedlander and Iwaniec \cite{FrIw1}, showed that $\Theta = 1/2
+1/230$ is an exponent of distribution, a value later improved by
Heath--Brown to $\Theta = 1/2+1/82$ \cite{H-B}.  The proof of these
two results use deep applications of Deligne's proof of the Riemann
Hypothesis for algebraic varieties over finite fields. Our main result
is a further, rather significant, improvement in the case of prime
moduli.

\begin{theorem}\label{central} For every non-zero integer $a$, every $\eps,A>0$, every $x\geq 2$ and  every prime $q$, coprime with $a$, satisfying $$q\leq x^{\frac{1}2+\frac{1}{46}-\eps},$$ we have
$$
\sum_{\substack{n\equiv a\bmod q \\n\leq x}} d_{3}(n)
= \frac{1}{\varphi (q)} \sum_{\substack{(n,q)=1 \\n\leq x}} d_{3}(n)
 + O\Bigl( \frac{x}{q(\log x)^{A}}\Bigr),
$$
where the implied constant only depends on $\eps$ and $A$ (and not on $a$); in other terms, the value $\Theta = 1/2+1/46$ is an exponent of distribution for the divisor function $d_{3}$ restricted to prime moduli.
\end{theorem}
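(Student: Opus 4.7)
The plan is to follow the general strategy pioneered by Friedlander--Iwaniec and refined by Heath--Brown, but upgrade the exponential-sum input using modern results on correlation sums of trace functions. I would start by writing $d_3(n)=\sum_{\ell mn=n}1$ and performing a smooth dyadic decomposition, so that the sum to be estimated becomes a finite combination of triple sums
$$
S(L,M,N;q,a)=\sumtriple_{\ell m n\equiv a\,(q)} V_1\Bigl(\tfrac{\ell}{L}\Bigr)V_2\Bigl(\tfrac{m}{M}\Bigr)V_3\Bigl(\tfrac{n}{N}\Bigr),
$$
with $LMN\asymp x$ and $L\leq M\leq N$ (so $L\leq x^{1/3}$). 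The expected main term is extracted by detecting the congruence via additive characters, $\mathbf{1}_{\ell mn\equiv a\,(q)}=q^{-1}\sum_{h\,(q)}e(h(\ell mn-a)/q)$; the $h=0$ term yields the smooth analogue of the main term, and after a standard deconvolution it only remains to show that the nonzero frequencies produce an error of size $x/(q(\log x)^A)$.

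Next, I would partition the $(L,M,N)$ ranges into several regimes and apply distinct tools in each. In the Type~I ranges, where one variable, say $\ell$, is appreciably smaller than $x^{1/3}$, Poisson summation in the long variable followed by Weil's bound for incomplete Kloosterman sums already gives a saving that suffices when $q$ is not too large. The critical regime is the Type~II range in which all three variables lie in a thin window around $x^{1/3}$; here one has to combine Voronoi summation in two of the variables (which transforms coefficients attached to $d_3$ into hyper-Kloosterman sums $\hypk_3$ modulo $q$) with a Cauchy--Schwarz step in the remaining free variable. After smoothing and opening the square, one is reduced to bounding incomplete correlation sums of the shape
$$
\mathcal{C}(K;h)=\sum_{x\in\Fp} K(x)\overline{K(x+h)},
$$
where $K$ is the trace function of a multiplicative/additive twist of the $\hypk_3$ sheaf.

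The key input that yields the improvement past Heath-Brown's $1/82$ is the general square-root cancellation $|\mathcal{C}(K;h)|\ll q^{1/2}$ proved in the authors' earlier work on trace functions: these bounds, resting on Deligne's Riemann Hypothesis and Katz's determination of the geometric monodromy of the Kloosterman sheaf, replace the more restricted Weil-type estimates used in \cite{FrIw1} and \cite{H-B}. One must of course verify that the relevant trace function is not geometrically isomorphic to one of its twists, so that the correlation sum is not diagonal; this rests on the fact that $\hypk_3$ has ``big'' monodromy $\SL_3$ and is not Kummer- or Artin--Schreier-induced from a curve of lower rank.

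The main obstacle, and the source of the specific numerical constant $1/46$, is the optimization of the dyadic ranges: after Cauchy--Schwarz and Voronoi, the resulting error is a sum over several parameters weighted by $q^{1/2}$, and one has to choose which of $(\ell,m,n)$ to smooth, which to invert via Voronoi, and which to Cauchy in, so as to simultaneously satisfy the Type~I constraint and the Type~II constraint for the largest possible window $q\leq x^{1/2+\vartheta}$. Balancing these competing inequalities with the improved $q^{1/2}$ trace-function bound replacing the classical $q^{3/4}$-type estimates yields $\vartheta=1/46$. The final steps are essentially bookkeeping: reassembling the smoothed pieces into a sharp-cutoff statement and checking that the implied constants are uniform in the residue class $a$.
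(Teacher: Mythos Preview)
Your outline has a genuine gap: the ingredient you single out as ``the key input'' is not the one that drives the exponent from $1/82$ to $1/46$, and the method you sketch will not reach $1/46$.

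After the Poisson--Voronoi step you correctly arrive at sums of $\hypk_3(an_1n_2n_3;q)$ over three dual variables. You then propose to apply Cauchy--Schwarz in one variable and reduce to additive correlation sums $\mathcal{C}(K;h)=\sum_{x}K(x)\overline{K(x+h)}$ with square-root cancellation. This is, up to details, the bilinear mechanism already present in Heath--Brown's argument (and it is essentially what the paper packages as its Type~III estimate, Theorem~\ref{th-type-3}). But Heath--Brown \emph{also} used Deligne-strength bounds for the relevant complete sums; there is no passage from ``$q^{3/4}$-type'' to ``$q^{1/2}$-type'' estimates to be exploited here, and a Cauchy--Schwarz/correlation-sum treatment alone does not cover the full range of $(\mu_1,\mu_2,\mu_3)$ once $\kappa>1/2+1/82$.

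What the paper actually adds is a second, independent estimate of a completely different nature: Theorem~\ref{d(n)K(n)}, which bounds sums of the shape $\sum_{m_1,m_2}K(m_1m_2)V(m_1/M_1)W(m_2/M_2)$ for a trace function $K$ with a saving of $q^{-1/8}$. This is \emph{not} obtained by Cauchy--Schwarz and correlation sums; it comes from the spectral theory of automorphic forms on $\Gamma_0(q)$ (the Kuznetsov formula and the results of \cite{FKM1,FKM2}). In the paper this estimate is applied to the dual $(n_1,n_2)$-sum for fixed $n_3$, and it handles precisely the range $\mu_3\geq \tfrac{11}{4}\kappa-1$ where the bilinear estimate~\eqref{ineq} fails (namely $\mu_3>2-3\kappa$). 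The exponent $1/46$ is exactly the threshold at which the two ranges $\mu_3\leq 2-3\kappa$ and $\mu_3\geq \tfrac{11}{4}\kappa-1$ just overlap; without the automorphic estimate there is an uncovered interval of $\mu_3$, and the optimization you describe cannot close it. Your proposal needs to incorporate this spectral input (or an equivalent) to reach the stated exponent.
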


It is certainly possible to extend our arguments to composite
moduli. This would require some generalization of our main tools,
which are general estimates for sums of \emph{trace functions over
  finite fields} twisted by Fourier coefficients of Eisenstein series
(see Theorem \ref{d(n)K(n)} below).

\subsection{Distribution on average}  

In applications, estimates like \eqref{eq-ued} are often required only
{\em on average} over moduli $q\leq Q$ and it is no surprise that
sometimes these become available for $Q=x^{\theta}$ and $\theta$
larger than the known exponents of distribution. For instance, since
the function $d_{k}$ is multiplicative, the large sieve inequality
implies that \eqref{eq-ued} holds on average for any $\theta<1/2$
(see, e.g, \cite{Mo} or~\cite{Wo}). Concerning $d_3$, Heath--Brown
\cite[Theorem 2]{H-B} proved the following result (in a slightly
stronger form):

\begin{equation*} 
  \sum_{q\leq Q}\ \max_{y\leq x}\ \max_{(a,q)=1}\Bigl\vert\ 
  \sum_{\substack{n\equiv a\bmod q  \\n\leq y}} d_{3}(n) -
  \frac{1}{\varphi (q)} \sum_{\substack{(n,q)=1 \\n\leq y}} d_{3}(n)\ \Bigr\vert
  =O\bigl(x^{\frac{40}{51}+\varepsilon} Q^\frac{7}{17}\bigr),
\end{equation*}
which shows that~\eqref{eq-ued} holds on average for $q\leq
x^{\frac{11}{21}-\varepsilon}$.
\par
Although we can not improve this (on average over prime moduli), we
are able to improve Theorem~\ref{central} for $d_3$ on average over
prime moduli in a single residue class $n\equiv a\mods{q}$, where
$a\not=0$ is fixed.

\begin{theorem}\label{onaverage} 
  For every non--zero integer $a$, for every $\varepsilon >0$ and for
  every $A>0$, we have
$$
\sum_{q\leq x^{\frac{9}{17}-\varepsilon}\atop q\text{ prime}, q\nmid
  a} \Bigl\vert \sum_{\substack{n\equiv a\bmod q \\n\leq x}} d_{3}(n)
- \frac{1}{\varphi (q)} \sum_{\substack{(n,q)=1 \\n\leq x}} d_{3}(n)
\Bigr\vert = O\Bigl( \frac{x}{(\log x)^A}\Bigr),
$$
where the implied constant only depends on $(a,A,\eps)$.
\end{theorem}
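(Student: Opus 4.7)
The plan is to build on the same combinatorial treatment of $d_3$ that underpins Theorem~\ref{central}, gaining the sharper exponent $1/2+1/34$ by exploiting the extra averaging over the prime $q$ via Cauchy--Schwarz and quasi-orthogonality of trace functions across varying moduli. The first step is to remove the absolute values by introducing unit complex numbers $\eta_q$, writing
$$\sum_q \bigl|E(q)\bigr| = \sum_q \eta_q E(q),$$
where $E(q)$ denotes the difference between the restricted sum of $d_{3}$ and its expected main term. Inserting a smooth partition of unity, applying the Heath--Brown/Vaughan-type decomposition of $d_{3}$ into Type I, II, III smooth convolutions, and detecting the congruence $n\equiv a\bmod q$ by additive characters then reduces the theorem to estimating model sums of the form
$$\mathcal{S}(Q) := \sum_{\substack{q\sim Q\\ q \text{ prime}}} \eta_q \sum_{m,n} \alpha_m \beta_n\, K_q(a m n),$$
with $K_q$ a hyper-Kloosterman trace function mod $q$ and $(\alpha_m), (\beta_n)$ coefficients arising from the Dirichlet convolution of $d_{3}$ restricted to dyadic ranges $M, N$.

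For the Type I and Type III pieces, the savings already provided by Theorem~\ref{d(n)K(n)} and by Voronoi summation (as in the proof of Theorem~\ref{central}) are strong enough that one can sum trivially in $q$ and still absorb $Q$ up to $x^{9/17-\eps}$. The decisive range is Type II, where for a single $q$ the bound available from the proof of Theorem~\ref{central} is essentially sharp and the improvement must come from the $q$-average. The idea is to apply Cauchy--Schwarz in $q$, bounding
$$\bigl|\mathcal{S}(Q)\bigr|^{2}\ll Q \sum_{m,n} |\alpha_m\beta_n|^{2}\ \sum_{q,q'\sim Q}\eta_q\ov{\eta_{q'}}\ K_q(amn)\ov{K_{q'}(amn)},$$
and then to complete the outer $(m,n)$-sum by Poisson summation modulo $qq'$. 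The diagonal $q=q'$ reproduces the contribution that Theorem~\ref{d(n)K(n)} would control on its own; the off-diagonal $q\neq q'$ becomes a sum of a trace function mod $qq'$ built from the tensor product of two Kloosterman sheaves pulled back modulo $qq'$, whose geometric irreducibility should yield square-root cancellation of size $O((qq')^{1/2+\eps})$ at every non-zero Poisson frequency.

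The main obstacle will be on the algebraic-geometric side: one must verify that, for distinct primes $q,q'$, the corresponding tensor-product sheaf over $\mathbb{A}^{1}_{\mathbb{Z}/qq'\mathbb{Z}}$ is geometrically irreducible and shares no exceptional Fourier--Mellin symmetries with itself, so that a genuine square-root saving applies at every non-zero frequency. Granted this, the Cauchy--Schwarz gain is of order $Q^{-1/2}$, which, combined with the optimal balance between the Type II lengths $M,N$ and the threshold $Q\leq x^{9/17-\eps}$, converts the exponent $1/2+1/46$ of Theorem~\ref{central} into $1/2+1/34$. A secondary task is to track the logarithmic losses in the partition of unity and the completion step carefully enough that the final error absorbs into $x/(\log x)^A$, which requires that the dependence of the trace-function estimates on the conductor of $K_q$ be made polylogarithmic after averaging.
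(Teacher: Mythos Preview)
Your approach is not the paper's, and as written it has a genuine gap. The paper does \emph{not} gain from the $q$-average via Cauchy--Schwarz and orthogonality of trace functions across different primes. Instead, it first observes that the individual bounds from the proof of Theorem~\ref{central} (conditions~\eqref{coffee} and~\eqref{ineq}) already cover all triples $\uple{M}$ except a narrow range $Q^{-3}x^{2-\alpha}\leq M_3\leq Q^{11/4}x^{-1+\alpha}$. For that critical range it uses a completely different device: the \emph{modulus-switching} trick, rewriting $m_1m_2m_3\equiv a\bmod q$ as $qr\equiv -a\bmod m_1m_2$ (with $m_3=(qr+a)/(m_1m_2)$), and then applying the Bombieri--Friedlander--Iwaniec theorem~\cite[Theorem~5]{BFI} (Kloostermania via the Kuznetsov formula and the Deshouillers--Iwaniec spectral large sieve) to the resulting bilinear sum in $(q,r)$ to modulus $m_1m_2$. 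The exponent $9/17$ is exactly what the BFI conditions~\eqref{crazycond} yield when $M_1\leq M_2\leq M_3$; trace functions play no role in this step.

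Your proposal, by contrast, has several concrete problems. The Cauchy--Schwarz display is inconsistent: you announce ``Cauchy--Schwarz in $q$'' but produce cross-terms in $q,q'$, which would arise from Cauchy--Schwarz in $(m,n)$; and in your bound the variables $m,n$ appear simultaneously inside $\sum_{m,n}|\alpha_m\beta_n|^2$ and inside $K_q(amn)\ov{K_{q'}(amn)}$, which cannot be the outcome of any single Cauchy--Schwarz step. Even with a corrected version (Cauchy--Schwarz in $(m,n)$, expand, complete), the inner variable is the \emph{product} $mn$, so Poisson completion modulo $qq'$ is not straightforward, and you give no numerics linking the resulting balance to the specific threshold $9/17$; the asserted ``gain of order $Q^{-1/2}$'' would in fact overshoot that exponent by a wide margin. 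The ``tensor-product sheaf over $\Aa^1_{\Zz/qq'\Zz}$'' concern is a non-issue: for distinct primes, the Chinese Remainder Theorem factors the completed sum into independent sums mod $q$ and mod $q'$, each handled by the single-prime trace bounds, so no new geometric input is needed. Finally, your claim that the Type~I and Type~III pieces can be summed trivially over $q$ up to $x^{9/17-\eps}$ is false: the individual estimates of Section~\ref{Conc} reach only $x^{12/23}$, and it is precisely the leftover range that requires the averaged argument.
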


\begin{remark}
  It is implicit from our proof and from the results of~\cite{BFI} on
  which it is based that this estimate holds uniformly for $1\leq
  |a|\leq x^{\delta}$, for some $\delta>0$ depending on $\eps$.
\end{remark}

\subsection{Remarks on the proofs}

The proof of Theorem~\ref{central} builds on two main ingredients
developed in \cite{FKM1} and~\cite{FKM2}:
\begin{enumerate}
\item A systematic exploitation of the spectral theory of modular
  forms; for instance, although our most important estimate involves
  only the divisor function, its proof passes through the full
  spectrum of the congruence subgroup $\Gamma_0(q)\subset\mathrm{SL}_2(\Zz)$;
\item The formalism of Frobenius trace functions modulo a prime, like
  the normalized Kloosterman sums $a\mapsto p^{-1/2}S(a,1;p)$: such
  functions are considered as fundamental building blocks in
  estimates, and not necessarily ``opened'' too quickly as exponential
  sums (for instance, the crucial estimate of a three-variable
  character sum in~\cite{FrIw1} is, in our treatment, hidden in the
  very general statement of Theorem~\ref{th-type-3}, which follows
  from~\cite{FKM2}.)
\end{enumerate}
\par
The outcome are two different estimates (Theorems \ref{d(n)K(n)} and
\ref{th-type-3}) which are applied through a simple combinatorial
decomposition of the main sum (compare, e.g., Section~\ref{sec-final}
with~\cite[\S 7]{H-B}).
\par
The proof of Theorem~\ref{onaverage} combines these estimates with the
``Kloostermaniac'' techniques pioneered by Deshouillers and Iwaniec
and pursued with great success by Bombieri, Fouvry, Friedlander and
Iwaniec to study primes in large arithmetic progressions.

\begin{remark}
  After the first version of this paper had been submitted for
  publication, the arithmetic importance of the exponent of
  distribution of the ternary divisor function for suitable large
  moduli was highlighted again in Zhang's groundbreaking
  work~\cite{zhang} on bounded gap between primes. Some of the
  techniques developped in the present paper have since been used --
  within the project Polymath 8 -- to give improvements of Zhang's
  results (see~\cite[Section 9]{Polymath} for a discussion).
\end{remark}

\subsection{Notation} 

We denote $e(z)=e^{2i\pi z}$ for $z\in\Cc$. For $n\geq 1$ and for an
integrable function $w\,:\, \Rr^n\rightarrow \Cc$, we denote by
\begin{equation*} 
  \fourier{w} (\xi)=\int_{\Rr^n} w(t) e(-\langle t,\xi\rangle)\, dt
\end{equation*}
its Fourier transform, where $\langle \cdot,\cdot\rangle$ is the
standard inner product on $\Rr^n$.
\par
If $q\geq 1$ is a positive integer and if $K\, :\ \Zz \longrightarrow
\Cc$ is a periodic function with period $q$, its Fourier transform is
the periodic function $\fourier{K}$ of period $q$ defined on $\Zz$ by
\begin{equation*} 
  \fourier{K}(n) = \frac{1}{\sqrt{q}}\sum_{h\bmod q} K(h) e\Bigl(
  \frac{hn}{q}\Bigr)
\end{equation*}
(note the minor inconsistency of sign choices). We have
$\widehat{\fourier{K}}(n)=K(-n)$ for all integer $n$.
\par
Given a prime $p$ and a residue class $a$ invertible modulo $p$, we
denote by $\bar{a}$ the inverse of $a$ modulo $p$.  For a prime $p$
and an integer $a$, the normalized hyper-Kloosterman sum
$\hypk_{k}(a;p)$ is given by
$$
\hypk_{k} (a;p):=\frac{1}{p^\frac{k-1}{2}}\
\underset{\substack{x_{1},\dots, x_{k}\bmod p\\ x_{1}\cdots x_{k}\equiv
    a \bmod p}} {\sum\quad \sum}\ e\Bigl( \frac{x_{1}+\cdots
  +x_{k}}{p}\Bigr).
$$
\par
The notation $q\sim Q$ means $Q< q \leq 2Q$, and $f(x)=O(g(x))$ for
$x\in X$ is synonymous with $f(x)\ll g(x)$ for $x\in X$.

\section{Summation formulas} 

\subsection{Poisson summation formula}

We recall a form of the Poisson summation formula in arithmetic
progressions:

\begin{lemma}\label{Poisson} 
  For any positive integer $q\geq 1$, any function $K$ defined on
  integers and $q$-periodic, and any smooth function $V$ compactly
  supported on $\Rr$, we have
$$
\sum_{n\geq 1}K(n)V(n)=\frac{1}{\sqrt{q}} \sum_{m}
\fourier{K}(m)\fourier{V}\Bigl(\frac{m}{q}\Bigr),
$$
and in particular
$$
\sum_{n\equiv a \bmod q}V(n)=\frac{1}{q} \sum_{m} e\Bigl(
\frac{am}{q}\Bigr)\fourier{V}\Bigl(\frac{m}{q}\Bigr).
$$
\end{lemma}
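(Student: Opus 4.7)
The plan is a direct reduction to the classical Poisson summation formula $\sum_{k\in\Zz}f(k)=\sum_{m\in\Zz}\fourier{f}(m)$ applied to a rescaled and shifted version of $V$, after first cutting the sum over $n$ into residue classes modulo $q$.

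More precisely, since $V$ is compactly supported I may freely extend $\sum_{n\geq 1}K(n)V(n)$ to $\sum_{n\in\Zz}K(n)V(n)$ (declaring $V$ to vanish on non--positive reals if necessary). Using the $q$--periodicity of $K$, this reorganizes as
$$
\sum_{n\in\Zz}K(n)V(n)=\sum_{r\bmod q}K(r)\sum_{k\in\Zz}V(qk+r).
$$
For each fixed $r$, the function $f_r(x):=V(qx+r)$ is smooth with compact support, and a straightforward linear change of variables gives $\fourier{f_r}(\xi)=q^{-1}e(r\xi/q)\fourier{V}(\xi/q)$. Applying classical Poisson to $f_r$ yields
$$
\sum_{k\in\Zz}V(qk+r)=\frac{1}{q}\sum_{m\in\Zz}\fourier{V}\Bigl(\frac{m}{q}\Bigr)e\Bigl(\frac{rm}{q}\Bigr).
$$
Substituting this back and exchanging the order of summation, the inner sum $\sum_{r\bmod q}K(r)e(rm/q)$ is exactly $\sqrt{q}\,\fourier{K}(m)$ by the definition of the Fourier transform of a $q$--periodic function adopted in the paper. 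The surviving prefactor $\sqrt{q}/q=1/\sqrt{q}$ produces the first identity.

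The second identity is then an immediate specialization to $K(n)=\mathbf{1}_{n\equiv a\bmod q}$, whose Fourier transform is $\fourier{K}(m)=q^{-1/2}e(am/q)$, so the $1/\sqrt{q}$ in the first formula combines with the $1/\sqrt{q}$ from $\fourier{K}$ to give $1/q$. There is no substantive obstacle in the argument: the only point meriting a word of care is the interchange of the sums over $r$ and $m$, which is justified by the finiteness of the inner sum over $k$ (from compact support of $V$) together with the Schwartz decay of $\fourier{V}$, making the double sum absolutely convergent uniformly in $r$.
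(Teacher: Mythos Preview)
Your argument is correct and is the standard derivation; the paper itself provides no proof of this lemma, treating it as a recall of a well-known fact, so there is nothing to compare against. One small remark: your parenthetical about ``declaring $V$ to vanish on non-positive reals if necessary'' is not quite the right phrasing, since redefining $V$ would alter $\fourier{V}$; the cleaner observation is that the identity as stated is really $\sum_{n\in\Zz}K(n)V(n)=\frac{1}{\sqrt q}\sum_m\fourier{K}(m)\fourier{V}(m/q)$, and the restriction to $n\geq 1$ in the lemma is harmless only because in every application in the paper $V$ is supported in $]0,+\infty[$.
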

 
\subsection{The tempered Voronoi summation formula}
 
We will also make crucial use of a general (soft) version of the
classical summation formula of Voronoi for the divisor function $d_2$,
which goes back to Deshouillers and Iwaniec~\cite[Lemma 9.2]{DI}. This
formula is called the {\em tempered Voronoi summation formula} in
\cite[Prop. 4.11]{IK}, and amounts essentially to an application of
the Poisson formula in two variables $(x,y)$ to a function depending
on the product $xy$. 
\par
We define the \emph{Voronoi transform} $\bessel{K}$ of a $p$-periodic
function $K\,:\, \Zz\lra \Cc$ by
$$
\bessel{K}(n) = \frac{1}{\sqrt{p}}\sum_{\substack{h\bmod p\\(h,p) =1}}
\fourier{K}(h) e\Bigl( \frac{n\overline h}{p} \Bigr).
$$
\par
In other words, we have
$$
\bessel{K}(n)=
\begin{cases}
  \displaystyle{ \frac{1}{\sqrt{p}}\sum_{h_{1}h_{2}=n}
    \fourier{K}
    (h_{1}) e\Bigl( \frac{h_{2}}{p}\Bigr)},& \text{ if } p\nmid n,\\
  \\
  \displaystyle{K(0) -\frac{\fourier{K}(0)}{\sqrt{p}}},& \text{
    if } p\mid n.
 \end{cases}
$$

\begin{proposition}[Tempered Voronoi formula modulo
  primes]\label{Voronoigeneral0} 
  Let $p$ be a prime number, let $K\, :\ \Zz \longrightarrow \Cc$ be a
  $p$-periodic function, and let $G$ be a smooth function on $\Rr^2$
  with compact support. We have
$$
\sum_{m,n\in\Zz} K(mn) G(m,n)=\frac{\fourier{K}(0)}{\sqrt{p}}
\sum_{m,n\in\Zz} G(m,n)+
\frac{1}{p}\sum_{m,n} \bessel{K}(mn)\fourier{G}\Bigl(\frac{m}p,\frac
np\Bigr).
$$
\end{proposition}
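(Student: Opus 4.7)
The strategy is to exploit the $p$-periodicity of $K$ by partitioning the outer double sum according to residue classes of $(m,n)$ modulo $p$, and then applying the two-variable Poisson summation formula (Lemma~\ref{Poisson}) inside each class. This reduces everything to evaluating the ``twisted Gauss sum''
$$
T(a,b) := \sum_{\alpha,\beta\bmod p} K(\alpha\beta)\, e\Bigl(\frac{\alpha a + \beta b}{p}\Bigr),
$$
in terms of which the identity
$$
\sum_{m,n\in\Zz} K(mn) G(m,n) = \frac{1}{p^2}\sum_{a,b\in\Zz}\fourier{G}\Bigl(\frac{a}{p},\frac{b}{p}\Bigr)\, T(a,b)
$$
is immediate.

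To evaluate $T(a,b)$, I would expand $K(\alpha\beta) = p^{-1/2}\sum_{h\bmod p}\fourier{K}(h)\, e(-h\alpha\beta/p)$ via Fourier inversion and split the $h$-sum at $h=0$. The term $h=0$ contributes $p^{3/2}\fourier{K}(0)$ precisely when $p\mid a$ and $p\mid b$, and vanishes otherwise, by orthogonality. For each invertible $h$ modulo $p$, summing first over $\beta$ forces $\alpha\equiv \ov{h}b\bmod p$, and the remaining sum over $\alpha$ then evaluates to $p\,e(ab\ov{h}/p)$. Collecting, the invertible-$h$ contribution to $T(a,b)$ equals $\sqrt{p}\sum_{(h,p)=1} \fourier{K}(h)\, e(ab\ov{h}/p)$.

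The key remaining point is to recognize this as $p\,\bessel{K}(ab)$. When $p\nmid ab$ this is immediate from the definition of $\bessel K$. I expect the main subtlety to be the boundary case $p\mid ab$: here $e(ab\ov h/p)=1$, so the sum collapses to $\sum_{(h,p)=1}\fourier{K}(h) = \sqrt{p}K(0) - \fourier{K}(0)$ (using $\sum_h \fourier K(h)=\sqrt p K(0)$, a consequence of the Fourier inversion identity $\widehat{\fourier K}(n)=K(-n)$), and this does match $\sqrt p\,\bessel K(ab)$ by the second clause in the definition of $\bessel K$. This uniform formula then yields the term $\frac 1 p\sum_{a,b}\bessel{K}(ab)\fourier{G}(a/p,b/p)$ of the desired identity upon substitution.

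Finally, the ``diagonal'' contribution coming from the $h=0$ piece, restricted to $a=pa'$, $b=pb'$, produces $\frac{\fourier{K}(0)}{\sqrt p}\sum_{a',b'\in\Zz}\fourier{G}(a',b')$. A last application of Poisson summation in two real variables, together with the symmetry $\widehat{\fourier G}(x,y) = G(-x,-y)$ and a trivial re-indexing, rewrites this as $\frac{\fourier{K}(0)}{\sqrt p}\sum_{m,n\in\Zz} G(m,n)$, which is exactly the main term of the stated formula.
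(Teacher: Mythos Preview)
Your argument is correct and is essentially the paper's proof unpacked: where the paper first Fourier-inverts $K(mn)$ and then quotes the tempered Voronoi identity $\sum_{m,n}G(m,n)e(amn/p)=p^{-1}\sum_{m,n}\fourier{G}(m/p,n/p)e(-\bar amn/p)$ from \cite[Prop.~4.11]{IK}, you instead apply two-variable Poisson summation directly and evaluate the resulting finite sum $T(a,b)$ by hand, which amounts to re-deriving that identity. The only extra content in your version is the explicit verification of the boundary case $p\mid ab$, which the paper absorbs into the definition of $\bessel K$; your route is thus slightly more self-contained at the cost of a few lines of computation.
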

 
\begin{proof}
  We have the discrete inversion formula
$$
K(mn) = \frac{1}{\sqrt{p}}\fourier{K}(0)
+\frac{1}{\sqrt{p}}\sum_{(h,p)=1} \fourier{K}(h)
e\Big(-\frac{hmn}{p}\Bigr),
$$ 
and then for any integer $a$ coprime to $p$, the tempered Voronoi
formula of~\cite[Prop. 4.11]{IK} gives
$$
\sum_{m,n\in\Zz}G(m,n)e\Bigl(\frac{amn}p\Bigr)
=\frac1p\sum_{m,n\in\Zz}\fourier{G}\Bigl(\frac mp,\frac
np\Bigr)e\Bigl(-\frac{\ov amn}p\Bigr),
$$
so that the result follows by multiplying this by $\fourier{K}(-a)$,
summing over $(a,p)=1$.
\end{proof}

\subsection{The combined formula}

We now combine the Poisson formula and the Voronoi formula to give a
general transformation formula for three-variable sums.


\begin{corollary}[Poisson-Voronoi formula]\label{cor-combined}
  Let $\uple{V}=(V_1,V_2,V_3)$ where $V_i$ are smooth functions with
  compact support in $]0,+\infty[$. Let $p$ be a prime number, and let
  $K$ be a $p$-periodic function on $\Zz$, supported on integers
  coprime to $p$. Define
$$
S(\uple{V};p,K)=\sum_{m_1,m_2,m_3\geq
  1}V_1(m_1)V_2(m_2)V_3(m_3)K(m_1m_2m_3).
$$
\par
We then have
$$
S(\uple{V};p,K)=\tA+\tB+\tC+\tD
$$
where
\begin{gather*}
  \tA=\frac{\fourier{K}(0)}{\sqrt{p}} \sumtriple_\stacksum{p\nmid
    m_1m_2}{m_3\geq 1} V_{1}(m_1)V_2(m_2)V_3(m_3),
  \\
  \tB=-\frac{\fourier{K}(0)}{p^{3/2}} \sumtriple_\stacksum{m_1,m_2\geq
    1}{p\nmid n_3}V_1(m_1) V_2(m_2)\fourier{V}_3
  \Bigl(\frac{n_3}{p}\Bigr),
\end{gather*}
\begin{gather*}
  \tC=\frac{\fourier{K}(0)}{p^2} \Bigl\{ \fourier{V}_1
  (0)\sum_{n_2}\fourier V_2\Bigl(\frac{n_2}{p}\Bigr)+\fourier{V}_2
  (0)\sum_{n_1}\fourier V_1\Bigl(\frac{n_1}{p}\Bigr)-\widehat
  V_1(0)\fourier{V}_2(0)\Bigr\}
  \sum_{p\nmid n_3}\fourier{V}_3\Bigl(\frac{n_3}{p}\Bigr),\\
  \\
  \tD=\frac{1}{p^{3/2}}\sumtriple_{\substack{n_1n_2\not=0\\p\nmid n_3}}
  \fourier{V}_{1}\Bigl(\frac{n_1}{p}\Bigr)\fourier{V}_2
  \Bigl(\frac{n_2}{p}\Bigr) \fourier{V}_3\Bigl(\frac{n_3}{p}\Bigr)
  \bbessel{K}(n_1n_2,n_3),
\end{gather*}
with
\begin{equation}\label{eq-fourier-bessel}
  \bbessel{K}(x,n)=\frac{1}{\sqrt{p}}
  \sum_{y\in \Fpt}\fourier{K}(n\bar{y})\hypk_2(xy;p).
\end{equation}
\par
In the sums above, $m_1,m_2,m_3$ run over integers $\geq 1$,
with additional conditions, while $n_1,n_2,n_3$ run over all
integers in $\Zz$.
\end{corollary}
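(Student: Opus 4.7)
The plan is to apply Poisson summation in the $m_3$ variable (Lemma \ref{Poisson}), then the tempered Voronoi formula in the $(m_1,m_2)$ pair (Proposition \ref{Voronoigeneral0}), and to organize the resulting four contributions according to whether the dual variables $n_1,n_2,n_3$ vanish or are coprime to $p$.

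First (Poisson step): since $K$ is supported on integers coprime to $p$, the inner sum over $m_3$ vanishes unless $p\nmid m_1m_2$, in which case the map $m_3\mapsto K(m_1m_2m_3)$ is $p$-periodic with Fourier transform $n_3\mapsto \fourier{K}(n_3\overline{m_1m_2})$ (change of variable). Lemma \ref{Poisson} gives
$$
\sum_{m_3\geq 1} V_3(m_3)\, K(m_1m_2m_3)=\frac{1}{\sqrt{p}}\sum_{n_3\in\Zz}\fourier{K}(n_3\overline{m_1m_2})\fourier{V}_3\Bigl(\frac{n_3}{p}\Bigr),
$$
and splitting the $n_3$-sum according to whether $p\mid n_3$ (so $\fourier{K}(n_3\overline{m_1m_2})=\fourier{K}(0)$) or not, combined with the identity $\sum_{p\mid n_3}\fourier{V}_3(n_3/p)=\sum_{m_3\geq 1}V_3(m_3)$ (another instance of Poisson on $\Zz$, using that $V_3$ is supported in $]0,+\infty[$), isolates $\tA$ after summing against $V_1V_2$. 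The remainder is $\frac{1}{\sqrt p}\sum_{p\nmid n_3}\fourier{V}_3(n_3/p)\,T(n_3)$ with
$$
T(n_3)=\sum_{\substack{m_1,m_2\geq 1\\ p\nmid m_1m_2}}V_1(m_1)V_2(m_2)\fourier{K}(n_3\overline{m_1m_2}).
$$

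Second (Voronoi step): for each $n_3$ with $p\nmid n_3$, introduce the $p$-periodic function $L_{n_3}(m):=\fourier{K}(n_3\bar m)\mathbf{1}_{p\nmid m}$, which satisfies $\fourier{L_{n_3}}(0)=-\fourier{K}(0)/\sqrt p$ (since $\sum_{y\in\Fpt}\fourier{K}(y)=-\fourier{K}(0)$ using $K(0)=0$), and apply Proposition \ref{Voronoigeneral0} to $T(n_3)=\sum_{m_1,m_2}V_1(m_1)V_2(m_2)L_{n_3}(m_1m_2)$. The main term of Voronoi, combined with the Step 1 prefactor, produces $\tB$. The Bessel term produces a triple sum of $\bessel{L_{n_3}}(n_1n_2)\fourier{V}_1(n_1/p)\fourier{V}_2(n_2/p)\fourier{V}_3(n_3/p)$, which we split into $n_1n_2=0$ and $n_1n_2\neq 0$: in the degenerate case, $\bessel{L_{n_3}}(0)=L_{n_3}(0)-\fourier{L_{n_3}}(0)/\sqrt p=\fourier{K}(0)/p$, and inclusion--exclusion over the three configurations $(n_1=0)$, $(n_2=0)$, $(n_1,n_2)=(0,0)$ yields the bracket defining $\tC$; in the non-degenerate case one invokes the key identity $\bessel{L_{n_3}}(n_1n_2)=\bbessel{K}(n_1n_2,n_3)$, giving $\tD$.

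The only real computation is therefore the identity $\bessel{L_{n_3}}(N)=\bbessel{K}(N,n_3)$ for $p\nmid N$. The substitution $m'=n_3\bar m$ in the definition of $\fourier{L_{n_3}}$ shows $\fourier{L_{n_3}}(h)=\bessel{K}(n_3h)$ for $h\in\Fpt$; inserting this into the definition of $\bessel{L_{n_3}}(N)$ yields a double sum which, after the successive substitutions $y=n_3\bar{h'}$ and $x_1=\bar h$, becomes $\frac{1}{p}\sum_{x_1,y\in\Fpt}\fourier{K}(n_3\bar y)\,e((Nx_1+y\bar{x_1})/p)$. A final substitution $x_1\mapsto Nx_1$ (legitimate since $p\nmid N$) converts the inner sum over $x_1$ into $\sqrt p\,\hypk_2(Ny;p)$, reproducing the definition \eqref{eq-fourier-bessel} of $\bbessel{K}(N,n_3)$. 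The boundary case $p\mid n_1n_2$ with $n_1n_2\neq 0$ is absorbed into $\tD$ since both $\bessel{L_{n_3}}(n_1n_2)$ and $\bbessel{K}(n_1n_2,n_3)$ equal $\fourier{K}(0)/p$ (using $\hypk_2(0;p)=-1/\sqrt p$), and the rest is bookkeeping.
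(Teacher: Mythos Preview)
Your argument is correct and follows the same route as the paper: Poisson in $m_3$ to isolate $\tA$ and the remaining $p\nmid n_3$ contribution, then the tempered Voronoi formula of Proposition~\ref{Voronoigeneral0} applied to $L_{n_3}(m)=\fourier{K}(n_3\bar m)\mathbf{1}_{p\nmid m}$ to produce $\tB$, $\tC$, $\tD$. The paper records the identity $\bessel{L}(x,n_3)=\bbessel{K}(x,n_3)$ as a ``straightforward computation''; your chain of substitutions (and the boundary check via $\hypk_2(0;p)=-1/\sqrt p$) supplies exactly that computation, so your write-up is in fact slightly more explicit than the paper's own proof.
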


\begin{proof}
  We start by applying the Poisson formula (Lemma \ref{Poisson}) to
  the variable $m_3$. Denoting by $n_3\in\Zz$ the dual Fourier
  variable, we isolate the terms with $n_3\equiv 0\bmod p$ and obtain
\begin{multline*} 
  S(\uple{V};p,a) =
  \frac{1}{\sqrt{p}}\sumtriple_\stacksum{(m_1m_2,p)=1}{n_3\equiv
    0\bmod{p}}
  V_{1}(m_1)V_2(m_2)\fourier{V}_3\Bigl(\frac{n_3}{p}\Bigr)
  \fourier{K}(0)
  \\
  +\frac{1}{\sqrt{p}}\sumtriple_\stacksum{(m_1m_2,p)=1}{(n_3,p)=1}
  V_{1}(m_1)V_2(m_2)\widehat V_{3}\Bigl(\frac{n_3}{p}\Bigr)
  \fourier{K}(n_3\overline{m_1m_2}),
\end{multline*}
where $\overline {m_1m_2}$ is the multiplicative inverse of $m_1m_2$
modulo $p$.
\par
We use again the Poisson formula to transform backward the first sum,
and get
$$
\sum_{n_3\equiv
  0\bmod{p}}\fourier{V}_3\Bigl(\frac{n_3}{p}\Bigr)=\sum_{n_3}\fourier{V}_3
(n_3)=\sum_{m_3}V_3(m_3)
$$ 
so that this first term is equal to the quantity $\tA$ of the
statement. 
\par
We now consider the second sum, which we denote
$\Sigma(\uple{V};p,K)$.  We apply the tempered Voronoi summation
formula of Proposition~\ref{Voronoigeneral0} to the sum over $m_1$ and
$m_2$, and to the function
$$
m\mapsto L(m,n_3) =\fourier{K}(n_3\bar{m})\quad\text{for}\quad p\nmid m,
$$
extended by $0$ to the $m$ divisible by $p$. Denoting by $\fourier{L}$
and $\bessel{L}$ the corresponding transforms with respect to the
variable $m$ when $n_3$ is fixed, we note that
\begin{equation}\label{567}
\fourier{L}(0,n_3)=\frac{1}{\sqrt{p}}\sum_{x\in\Fpt}\fourier{K}(x)=
-\frac{1}{\sqrt{p}}\fourier{K}(0)
\end{equation}
for every $n_3$ coprime to $p$, since $K(0)=0$ by assumption.
\par
Thus we obtain
$$
\Sigma(\uple{V};p,K)=\Sigma_1(\uple{V};p,K)+\Sigma_2(\uple{V};p,K)
$$
where
$$
\Sigma_1(\uple{V};p,K)=-\frac{\hat{K}(0)}{p^\frac{3}{2}}
\, \sumtriple_\stacksum{m_1,m_2\geq 1}{p\nmid n_3}V_1(m_1) V_2(m_2)\widehat
V_{3}\Bigl(\frac{n_3}{p}\Bigr)=\tB,
$$ 
and
\begin{multline*}
  \Sigma_2(\uple{V};p,K)=\frac{1}{p^{3/2}}\sum_{p\nmid n_3}
  \fourier{V}_3\Bigl(\frac{n_3}{p}\Bigr)\Bigl\{\bessel{L}(0,n_3)
  \sumsum_{n_1n_2=0} \fourier{V}_{1}\Bigl(\frac{n_1}{p}\Bigr)
  \fourier{V}_2
  \Bigl(\frac{n_2}{p}\Bigr) \\
  +\sumsum_{n_1n_2\not=0}\fourier{V}_1\Bigl(\frac{n_1}{p}\Bigr)
  \fourier{V}_2\Bigl(\frac{n_2}{p}\Bigr)
\bessel{L}(n_1n_2,n_3)\Bigr\}.
\end{multline*}
\par
A straightforward computation shows that
$$
\bessel{L}(x,n_3)=\frac{1}{\sqrt{p}}
\sum_{y\in\Fpt}\fourier{K}(n_3\bar{y})\hypk_2(xy;p)=\bbessel{K}(x,n_3).
$$
\par
In particular, we have
$$
\bessel{L}(0,n_3)=-\frac{1}{\sqrt{p}}\sum_{y\in\Fpt}\fourier{K}(y)=
-\frac{\fourier{L}(0,n_3)}{\sqrt p},
$$
so, by \eqref{567},  the first term in $\Sigma_2(\uple{V};p,K)$ is 
$$
\frac{\fourier{K}(0)}{p^2} \Bigl\{ \widehat V_1(0)\sum_{n_2}\fourier
V_2\Bigl(\frac{n_2}{p}\Bigr)+\widehat V_2(0)\sum_{n_1}\fourier
V_1\Bigl(\frac{n_1}{p}\Bigr)-\widehat V_1(0)\widehat V_2(0)\Bigr\}
\sum_{p\nmid n_3}\fourier{V}_3 \Bigl(\frac{n_3}{p}\Bigr)=\tC,
$$ 
while the remaining contribution is the quantity $\tD$.
\end{proof}

In this paper, we will only need the following case of these
transformations: 

\begin{lemma}\label{1060}
  Let $p$ be a prime, let $a$ be an invertible residue class modulo
  $p$, and let, for $n$ integer
$$
K(n)=\delta_a(n).
$$
Then, for every $n$ not divisible by $p$ and for every $x$, we have
the equality
$$
\bbessel{K}(x,n)=\frac{1}{\sqrt{p}} \hypk_3(anx;p).
$$
\end{lemma}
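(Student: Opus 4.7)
The plan is a direct computation unwinding all definitions, with one clean change of variables to recognize a hyper-Kloosterman sum of length three. The only real thing to choose correctly is the substitution that turns the outer Weyl term $e(an\bar y/p)$ into the ``third coordinate'' of a product constraint $z_1z_2z_3\equiv anx$.

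First I would compute the discrete Fourier transform of $K=\delta_a$. Directly from the definition in the Notation section,
$$
\fourier{K}(h)=\frac{1}{\sqrt p}\sum_{h'\bmod p}\delta_a(h')e\Bigl(\frac{hh'}{p}\Bigr)=\frac{1}{\sqrt p}e\Bigl(\frac{ah}{p}\Bigr),
$$
valid for every $h\in\Zz$. In particular $\fourier K(n\bar y)=p^{-1/2}e(an\bar y/p)$ for $y\in\Fpt$.

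Next I would plug this into the formula~\eqref{eq-fourier-bessel} for $\bbessel{K}(x,n)$ and open up $\hypk_2(xy;p)$ using its definition, obtaining
$$
\bbessel{K}(x,n)=\frac{1}{p^{3/2}}\sum_{y\in\Fpt}\ \sum_{\substack{z_1,z_2\bmod p\\ z_1z_2\equiv xy\bmod p}}e\Bigl(\frac{an\bar y+z_1+z_2}{p}\Bigr).
$$
Now I would perform the change of variables $y\mapsto z_3$ defined by $z_3\equiv an\bar y\bmod p$, i.e.\ $y\equiv an\overline{z_3}\bmod p$. As $y$ ranges over $\Fpt$ so does $z_3$, and the constraint $z_1z_2\equiv xy\bmod p$ becomes, after multiplication by $z_3$,
$$
z_1z_2z_3\equiv xyz_3\equiv x\cdot an\overline{z_3}\cdot z_3\equiv anx\bmod p.
$$
Substituting gives
$$
\bbessel{K}(x,n)=\frac{1}{p^{3/2}}\ \sum_{\substack{z_1,z_2\bmod p,\ z_3\in\Fpt\\ z_1z_2z_3\equiv anx\bmod p}}e\Bigl(\frac{z_1+z_2+z_3}{p}\Bigr).
$$

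In the case $p\nmid anx$ (which is the relevant case for the Corollary, since there $p\nmid n_3$ and we may assume $p\nmid a$), the constraint $z_1z_2z_3\equiv anx\bmod p$ automatically forces $z_1,z_2\in\Fpt$, so the restriction $z_3\in\Fpt$ can be dropped and by the definition of $\hypk_3$ the right-hand side is exactly $p^{-1/2}\hypk_3(anx;p)$, which is what we want. The main (very mild) obstacle is simply to choose the substitution $z_3=an\bar y$ so that the three exponentials combine into the symmetric expression $e((z_1+z_2+z_3)/p)$ subject to a single multiplicative constraint; once this is spotted the identity is immediate.
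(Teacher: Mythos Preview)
Your proof is correct and follows exactly the approach the paper sketches (``compute $\fourier K$, plug into~\eqref{eq-fourier-bessel}, open the Kloosterman sum''); you have simply made the change of variables explicit. One small point: the lemma is stated for \emph{every} $x$, not only $p\nmid x$, so to match the statement you should also note that when $p\mid x$ the extra terms with $z_3=0$ that appear upon dropping the restriction $z_3\in\Fpt$ contribute $\sum_{z_1,z_2\bmod p}e((z_1+z_2)/p)=0$, and the identity still holds.
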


\begin{proof}  
  Obviously, we have $\hat{K}(x)=\frac{1}{\sqrt{p}}e(ax/p)$, and the
  result then follows from the definition \eqref{eq-fourier-bessel}
  after opening the Kloosterman sum.
\end{proof}

\section{Results on trace functions}\label{traceresults}

The key new input to the present paper is the application of a special
case of the following very general theorem concerning algebraic trace
functions summed against the divisor functions.

\begin{theorem}[Divisor twists of trace functions]\label{d(n)K(n)}
  Let $p$ be a prime number, and let $K$ be the trace function of an
  $\ell$-adic middle-extension sheaf $\sheaf{F}$, pointwise of weight
  $0$, on the affine line over $\Fp$. Assume that $\sheaf{F}$ is
  geometrically irreducible and is not geometrically isomorphic to an
  Artin-Schreier sheaf associated to an additive character modulo $p$.
\par
Let $Q\geq 1$ and let $V, W$ be smooth test functions, compactly
supported in $[1/2,2]$, such that, for $\xi >0$, one has 
\begin{equation}\label{condV}
\xi^j V^{(j)} (\xi),\ \xi^j W^{(j)} (\xi) \ll Q^j,
\end{equation}
for all integer $j\geq 0$, with implicit constants that depend on
$j$. For any $M_1,M_2\geq 1$, we have
$$
\sum_{m_1,m_2\geq 1} K(m_1m_2)
V\Bigl(\frac{m_1}{M_1}\Bigr)W\Bigl(\frac{m_2}{M_2}\Bigr) \ll QM_1M_2
\Bigl( 1 +\frac{p}{M_1M_2}\Bigr)^{1/2} p^{-\eta},
$$
for any $\eta <1/8$. The implicit constant depends only on $\eta$, on
the implicit constants in~\emph{(\ref{condV})} and on the conductor of
$\sheaf{F}$.
\end{theorem}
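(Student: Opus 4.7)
The plan is to transform the two-variable sum via the tempered Voronoi summation formula (Proposition~\ref{Voronoigeneral0}) and to estimate the resulting dual bilinear form by Cauchy-Schwarz followed by Deligne's Riemann Hypothesis over finite fields. Calling $S$ the sum to be bounded and applying Proposition~\ref{Voronoigeneral0} with $G(x,y)=V(x/M_1)W(y/M_2)$ expresses it as a main term
$$\frac{\fourier{K}(0)}{\sqrt p}\sum_{m_1,m_2\geq 1}V(m_1/M_1)W(m_2/M_2)\ll\frac{M_1M_2}{\sqrt p}$$
(where $\fourier{K}(0)\ll 1$ by Deligne's bound, since $\sheaf{F}$ is geometrically irreducible and nontrivial), plus a dual sum essentially of the shape
$$T=\frac{M_1M_2}{p}\sum_{m,n\in\Zz}\fourier{V}(mM_1/p)\fourier{W}(nM_2/p)\bessel{K}(mn),$$
where the smoothness hypotheses~\eqref{condV} on $V,W$ restrict the effective support to $|m|\ll Qp/M_1$ and $|n|\ll Qp/M_2$.

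The degenerate contributions coming from $m=0$ or $n=0$ can be handled directly using $\bessel{K}(0)\ll 1$ and the Plancherel identity for the Fourier transform modulo $p$. The heart of the argument lies in the remaining bilinear block
$$T_0=\frac{M_1M_2}{p}\sumsum_{m,n\neq 0}\fourier{V}(mM_1/p)\fourier{W}(nM_2/p)\bessel{K}(mn),$$
to which I would apply Cauchy-Schwarz in the $m$-variable, expand the square, and complete the resulting inner $m$-sum to a complete sum modulo $p$. This reduces the problem to estimating the correlation sums
$$\mathcal{C}(n,n')=\sum_{m\bmod p}\bessel{K}(mn)\overline{\bessel{K}(mn')}$$
for $n,n'\not\equiv 0\bmod p$, with the diagonal contribution $\mathcal{C}(n,n)\ll p$ following from Plancherel and the $L^2$-boundedness of $\bessel{K}$.

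The critical input is the square-root cancellation $\mathcal{C}(n,n')\ll\sqrt p$ for $n\not\equiv n'\bmod p$, with an implicit constant depending only on the conductor of $\sheaf{F}$. This is precisely where the assumption that $\sheaf{F}$ is not Artin-Schreier becomes essential: the Fourier-sheaf formalism developed in~\cite{FKM1,FKM2} guarantees that, under this non-degeneracy hypothesis, the Voronoi transform $\bessel{K}$ is itself the trace function of a geometrically irreducible middle-extension sheaf of bounded conductor, whose dilates by distinct nonzero scalars modulo $p$ are geometrically non-isomorphic. Granted this non-isomorphism, Deligne's Riemann Hypothesis yields the required cancellation.

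The main obstacle I anticipate is exactly this geometric non-isomorphism step: one must track how the monodromy of $\sheaf{F}$ transforms under the Bessel-Voronoi operation, verify that no exceptional coincidence can arise outside the excluded Artin-Schreier case, and control the resulting conductors uniformly. Once the correlation bound $\mathcal{C}(n,n')\ll\sqrt p$ is in hand, substituting it into the Cauchy-Schwarz estimate and carefully accounting for the effective frequency supports $Qp/M_1$ and $Qp/M_2$ balances a diagonal contribution of order $QM_1M_2(1+p/M_1M_2)^{1/2}p^{-1/2}$ against an off-diagonal contribution of order $QM_1M_2(1+p/M_1M_2)^{1/2}p^{-1/4}$; their geometric mean, which is the natural outcome of the Cauchy-Schwarz optimization, produces the announced bound with any exponent $\eta<1/8$.
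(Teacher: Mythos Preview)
The paper does not prove this theorem; it is quoted from~\cite{FKM2} (Theorem~1.15 there), and the proof builds on the spectral methods of~\cite{FKM1}. As the authors stress in the introduction, the argument ``passes through the full spectrum of the congruence subgroup $\Gamma_0(q)$'': one interprets the smooth divisor weight as the Fourier coefficient of an Eisenstein series, embeds the sum in an amplified second moment over a basis of automorphic forms of level $p$, applies the Petersson/Kuznetsov formula, and reduces to correlation sums of $K$ against Kloosterman fractions, which are then controlled via Deligne's theorem together with a geometric classification of the $\PGL_2(\Fp)$-automorphisms of the sheaf $\sheaf{F}$.

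Your approach---Voronoi followed by Cauchy--Schwarz and P\'olya--Vinogradov completion---is more elementary but does not reach the exponent $\eta<1/8$. The failure is already visible in the balanced critical range $M_1=M_2=\sqrt{p}$, $Q=1$. After Voronoi both dual variables have length $\asymp\sqrt{p}$, and the dual sum is
\[
T_0\;\asymp\;\sum_{|m|,|n|\ll\sqrt{p}}\alpha_m\beta_n\,\bessel{K}(mn)
\]
with bounded smooth weights. Cauchy--Schwarz in $m$ gives
\[
|T_0|^2\ll\sqrt{p}\sum_{|n|,|n'|\ll\sqrt{p}}\Bigl|\sum_{|m|\ll\sqrt{p}}\bessel{K}(mn)\overline{\bessel{K}(mn')}\Bigr|.
\]
Even granting your off-diagonal bound $\mathcal{C}(n,n')\ll\sqrt{p}$ for the \emph{complete} sum, the \emph{incomplete} inner $m$-sum has length only $\sqrt{p}$; completion modulo $p$ therefore costs $\sqrt{p}$ as well, so the off-diagonal contributes $\ll p\cdot\sqrt{p}$ and the diagonal $\ll\sqrt{p}\cdot\sqrt{p}$, yielding $|T_0|^2\ll\sqrt{p}\cdot p^{3/2}=p^2$, i.e.\ $T_0\ll p$, which is the trivial bound. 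No choice of which variable absorbs the Cauchy--Schwarz, and no further Poisson step, helps: the dual sum has exactly the same bilinear shape as the original, and you are stuck at the P\'olya--Vinogradov barrier. Your claimed diagonal and off-diagonal orders $p^{-1/2}$ and $p^{-1/4}$ are not what the computation actually produces, and the ``geometric mean'' heuristic has no analytic content here (after Cauchy--Schwarz one obtains a \emph{sum}, not a product, of diagonal and off-diagonal terms).

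What breaks this barrier is genuinely automorphic: the Kuznetsov formula performs a \emph{spectral} averaging over the whole family of level-$p$ forms, which is far more efficient than arithmetic Cauchy--Schwarz in a single frequency variable. The exponent $1/8$ is tied to the amplifier length $p^{1/4}$ in~\cite{FKM1} and cannot be recovered by your route.
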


This is Theorem 1.15 in~\cite{FKM2}, which depends essentially on
methods of~\cite{FKM1}, to which we refer for more details and
definitions concerning trace functions.  For the purpose of this
paper, it is sufficient to know that for any $k\geq 2$, any prime $p$
and $h\in\Fpt$, the functions given by
\begin{equation}\label{eq-special-weight}
  K(a)=(-1)^{k-1}\hypk_k(ah;p),\text{ for } a\in\Fpt, \quad\quad K(0)=
  (-1)^k p^{-(k-1)/2}
\end{equation}
are trace functions associated to geometrically irreducible sheaves
$\sheaf{F}_{k,h}$ of rank $k$ with conductor bounded by a constant
$C_k$ depending only on $k$, which is proved
in~\cite[Prop. 10.3]{FKM1}. In fact, only the case $k=3$ will be used.
\par
Another general result is the following estimate for general ``type
$III$'' sums, which follows from our results in~\cite{FKM2}. In the
context of the function $d_3$, the corresponding trick of grouping
variables appears in the work of Heath-Brown
(see~\cite[p. 42--43]{H-B}, where previous occurrences in work of Y\"uh
is mentioned).

\begin{theorem}\label{th-type-3}
  Let $p$ be a prime, and let $K$ be the trace function of an
  $\ell$-adic middle-extension sheaf $\sheaf{F}$, pointwise of weight
  $0$, on the affine line over $\Fp$. Assume that $\sheaf{F}$ is
  geometrically irreducible and is not geometrically isomorphic to a
  tensor product of an Artin-Schreier sheaf associated to an additive
  character modulo $p$ and a multiplicative Kummer sheaf.
\par
For any complex coefficients $(\alpha(n))_{\vert n \vert \leq N_1}$,
$(\beta(n))_{|n|\leq N_2}$, $(\gamma(n))_{|n|\leq N_3}$ with modulus
less than $1$ and any $\eps>0$, we have
\begin{multline*}
  \sumtriple_{\substack{1\leq |n_i|\leq N_i\\p\nmid n_3}}
  \alpha(n_1)\beta(n_2)\gamma(n_3)K(n_1n_2n_3)\\
   \ll
 (\log
  p)^{1/2}  (N_1N_2N_3)^{1/2+\eps}  \Bigl(
  \frac{N_1N_2N_3}{\sqrt{p}}+N_1N_2+N_3\sqrt{p}\Bigr)^{1/2},
\end{multline*}
where the implied constant depends only on $\eps>0$ and polynomially
on $\cond(\sheaf{F})$.
\end{theorem}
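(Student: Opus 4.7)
The plan is a ``type III'' grouping: fuse $m := n_1 n_2$ into a single variable and apply Cauchy--Schwarz in $m$ to decouple the coefficient $\gamma$, reducing the problem to bilinear correlation sums $\sum_m K(am)\overline{K(bm)}$ that are then estimated via the trace--function formalism. Set
\[
c(m) := \sum_{\substack{n_1 n_2 = m\\1\leq|n_i|\leq N_i}} \alpha(n_1)\beta(n_2),
\]
so that $|c(m)|\leq d(|m|)$, the support is contained in $|m|\leq N_1N_2$, and $\sum_m|c(m)|^2 \ll (N_1N_2)^{1+\eps}$ by the classical bound $\sum_{m\leq x} d(m)^2\ll x(\log x)^3$. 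Writing $S = \sum_m c(m)T(m)$ with $T(m):=\sum_{|n_3|\leq N_3,\,p\nmid n_3}\gamma(n_3)K(mn_3)$, Cauchy--Schwarz yields $|S|^2 \leq (N_1N_2)^{1+\eps}\,\Sigma$, where $\Sigma:=\sum_{|m|\leq N_1N_2}|T(m)|^2$.

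Opening the square and swapping the order of summation,
\[
\Sigma = \sum_{\substack{|n_3|,|n_3'|\leq N_3\\p\nmid n_3 n_3'}}\gamma(n_3)\overline{\gamma(n_3')}\sum_{|m|\leq N_1N_2} K(mn_3)\overline{K(mn_3')}.
\]
We split according to whether $n_3 \equiv n_3' \pmod p$ or not. On the diagonal, $K(mn_3)=K(mn_3')$ by $p$-periodicity, and since $K$ is bounded, the inner sum is trivially $O(N_1N_2)$; elementary counting produces $\ll N_3^2/p + N_3$ diagonal pairs, so the diagonal contribution to $\Sigma$ is $\ll N_1N_2 N_3 + N_1N_2N_3^2/p$.

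The off-diagonal part is the heart of the argument. For $a:=n_3$, $b:=n_3'$ with $a\not\equiv b\pmod p$, complete the inner sum over $m$ modulo $p$ via Fourier, reducing matters to the complete correlation sums
\[
C(a,b,h) := \sum_{x\bmod p} K(ax)\overline{K(bx)}\, e\Bigl(\frac{hx}{p}\Bigr),\qquad h\in\Fp.
\]
Under the hypothesis that $\sheaf{F}$ is geometrically irreducible and not geometrically isomorphic to an Artin--Schreier tensor Kummer sheaf, the framework of~\cite{FKM1,FKM2} yields (via Deligne's Riemann hypothesis applied to the pulled-back and twisted sheaf $[\times a]^*\sheaf{F}\otimes[\times b]^*\sheaf{F}^\vee\otimes\mathcal L_\psi$) that $|C(a,b,h)|\ll\sqrt p$ uniformly in $h$ whenever $a\not\equiv b\pmod p$. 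Standard completion then produces
\[
\sum_{|m|\leq M} K(am)\overline{K(bm)} \ll \Bigl(\frac{M}{\sqrt p} + \sqrt p\Bigr)\log p,\qquad M:=N_1N_2,
\]
contributing $\ll N_3^2(N_1N_2/\sqrt p + \sqrt p)\log p$ to $\Sigma$.

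Assembling, $\Sigma \ll (\log p)\bigl(N_1N_2N_3 + N_1N_2N_3^2/\sqrt p + N_3^2\sqrt p\bigr)$; inserting this into $|S|^2\leq (N_1N_2)^{1+\eps}\,\Sigma$ and comparing the three resulting summands one by one with the three summands of the target bound (each is dominated after absorbing small powers of $N_1N_2N_3$), the estimate follows upon taking square roots. The main obstacle is the uniform correlation bound $|C(a,b,h)|\ll\sqrt p$: verifying that the pulled-back and twisted sheaf $[\times a]^*\sheaf{F}\otimes[\times b]^*\sheaf{F}^\vee\otimes\mathcal L_\psi$ has no geometric invariants demands a full analysis of the geometric monodromy of $\sheaf F$ under multiplicative translates, and it is precisely to make this analysis go through that one must exclude sheaves of Artin--Schreier tensor Kummer type; this is the essential algebro-geometric input supplied by~\cite{FKM1,FKM2}.
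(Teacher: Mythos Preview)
Your approach is essentially that of the paper: both arguments group $n_1,n_2$ into the single variable $m=n_1n_2$ with weight the Dirichlet convolution $c(m)=(\alpha\star\beta)(m)$, reducing to a bilinear sum in $(m,n_3)$. The paper then simply quotes the bilinear estimate \cite[Th.~1.16(1)]{FKM2} as a black box (with $M=N_3$, $N=N_1N_2$, $\alpha_m=\gamma(m)$, $\beta_n=c(n)$), whereas you unwind its proof via Cauchy--Schwarz in $m$, expansion of the square, and completion of the resulting correlation sums; the numerics agree term by term with the stated bound.

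One imprecision is worth flagging. Your claim that $|C(a,b,h)|\ll\sqrt p$ holds for \emph{every} pair with $a\not\equiv b\pmod p$ is slightly too strong as stated. The hypothesis on $\sheaf{F}$ does not exclude the existence of a few exceptional ratios $c=b\bar a\neq 1$ for which $[\times c]^*\sheaf{F}\cong\sheaf{F}\otimes\mathcal L_\psi$ for some additive character $\psi$ (for instance, any irreducible sheaf pulled back along $x\mapsto x^2$ is invariant under $x\mapsto -x$, yet need not be of Artin--Schreier--Kummer type). What the hypothesis \emph{does} guarantee, through the autocorrelation analysis carried out in \cite{FKM1,FKM2}, is that the set of such bad ratios $c$ has size bounded polynomially in $\cond(\sheaf F)$. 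These exceptional pairs $(n_3,n_3')$ therefore behave like the diagonal and contribute an extra $O_{\cond(\sheaf F)}(N_1N_2(N_3+N_3^2/p))$ to $\Sigma$, which is harmlessly absorbed; the final bound is unchanged, but the diagonal/off-diagonal split should be drawn at ``$b\bar a$ bad'' versus ``$b\bar a$ good'' rather than at $a\equiv b$ versus $a\not\equiv b$.
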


\begin{proof}
  After elementary dyadic subdivisions (and summing over the separate
  signs), we see that it is enough to apply~\cite[Th. 1.16 (1)]{FKM2}
  with the choices
\begin{gather*}
  M=N_3,\quad\quad N=N_1N_2,\\
  \alpha_m=\gamma(m),\quad\quad \beta_n=(\alpha\star \beta)(n)
\end{gather*}
where $\star$ is the Dirichlet convolution. The bound we derive
from~\cite{FKM2} is 
$$
\Bigl(\sum_{m}|\gamma(m)|^2\Bigr)^{1/2}
\Bigl(\sum_{n}|(\alpha\star\beta)(n)|^2\Bigr)^{1/2}
(N_1N_2N_3)^{1/2}
\Bigl(\frac{1}{p^{1/4}}+\frac{1}{\sqrt{N_3}}+\frac{p^{1/4}(\log
  p)^{1/2}}
{\sqrt{N_1N_2}}\Bigr),
$$
and one checks easily that this implies the statement above.
\end{proof}

Again we will only need to know that we can apply this to the
functions $K$ above.

\section{Preliminary reductions}\label{prelred}

In this section, we will set up the proof of Theorem~\ref{central}, in
a way very similar to the preliminaries in~\cite{FrIw1}
and~\cite{H-B}. The notational conventions that we introduce here will
be valid throughout the remainder of the paper.
\par
In \S \ref{prelred} and in \S \ref{Conc} the letter $q$ is reserved to
denote a prime number, $x\geq 1$ is a real number, and we denote $\LL
=\log 2x$ for simplicity. We define
$$
 S(x;q,a):=\sum_{\substack{ n\equiv a \bmod q\\ n\leq x}}
 d_{3}(n)=\sum_{\substack{ m_1m_2m_3\equiv a \bmod q\\ m_1m_2m_3\leq
     x}} 1 ,
$$
where $a$ is some integer coprime with $q$, and
$$
S^*(x;q):= \sum_{\substack{n\leq x\\ (n,q)=1}} d_3( n),\quad\quad S(x)
=\sum_{n\leq x} d_3 (n).
$$
\par
If $q<x^{1/100}$, we have~(\ref{eq-ued}) trivially. Hence we can
assume that
\begin{equation}\label{q>x1/2}
  x^\frac{1}{100}\leq q \leq x^{\frac{99}{100}}.
\end{equation}
\par
Since $q$ is prime, this assumption \eqref{q>x1/2} implies 
\begin{equation}\label{S=S*}
S^*(x;q) =S(x) +O_\epsilon (x^{\frac{99}{100} +\epsilon}),
\end{equation}
for every $\epsilon >0$. Moreover, $S(x)$ is of size $\frac{1}{2}
x\LL^2$, and hence Theorem \ref{central} will follow if we prove that,
for any $\theta<1/2+1/46$, we have
\begin{equation}\label{reduction}
  S(x;q,a)=\frac{1}{q} S(x)+ O\Bigl(\frac{x}{q\,\LL^A}\Bigr),
\end{equation}
for any $A>0$, uniformly for $a$ not divisible by $q$ such that
$x^{1/100}\leq q\leq x^{\theta}$, the implied constant depending on
$\theta$ and $A$.

We will need to make the three variables $m_1,m_2$ and $m_3$
independent and smooth. For this purpose, we use a smooth partition of
unity, which is given by the following lemma (see \cite[Lemme 2]{Fo85}
for instance).

\begin{lemma}\label{2.1}
  For every $\Delta >1$, there exists a sequence $(b_{\ell,
    \Delta})_{\ell\geq 0}$ of smooth functions with support included
  in $[\Delta^{\ell -1}, \Delta^{\ell +1}]$, such that
$$
\sum_{\ell =0}^\infty b_{\ell, \Delta}(\xi)=1\text{ for all } \xi \geq
1,
$$
and
\begin{equation}\label{decay}
b_{\ell, \Delta}^{(\nu)} (\xi) \ll_{\nu} \xi^{-\nu} \Delta^\nu (\Delta
-1)^{-\nu}, \text{ for all } \xi \geq 1 \text{ and } \nu \geq 0.
\end{equation}
 \end{lemma}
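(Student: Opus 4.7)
The plan is to change variables to $t=\log\xi/\log\Delta$, in which the multiplicative scale $\Delta$ becomes the additive scale $1$, and then transport a fixed $\Delta$-independent partition of unity on $\Rr$ back to the original variable.

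First I would fix, once and for all, a smooth template $G\colon \Rr\to[0,1]$, supported in $[-1,1]$, satisfying $\sum_{\ell\in\Zz}G(t-\ell)=1$ for all $t\in\Rr$. A standard construction is $G=\psi\star\chi_{[-1/2,1/2]}$, where $\psi\in C^\infty_c(\Rr)$ is a non-negative bump supported in $[-1/2,1/2]$ of total mass $1$: the convolution is smooth with support in $[-1,1]$, and the partition identity is inherited from $\sum_{\ell}\chi_{[-1/2,1/2]}(t-\ell)=1$. Each derivative $G^{(k)}$ is then bounded by a constant $C_k$ depending only on $k$. I would then set
$$
b_{\ell,\Delta}(\xi)=G\Bigl(\frac{\log\xi}{\log\Delta}-\ell\Bigr),\qquad \ell\geq 0.
$$
The support condition on $G$ immediately yields $\mathrm{supp}(b_{\ell,\Delta})\subseteq[\Delta^{\ell-1},\Delta^{\ell+1}]$. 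For $\xi\geq 1$ the argument $\log\xi/\log\Delta$ is non-negative, so any term indexed by $\ell\leq -1$ is evaluated at a point $\geq 1$ where $G$ vanishes by continuity; hence $\sum_{\ell\geq 0}b_{\ell,\Delta}(\xi)=\sum_{\ell\in\Zz}G(\log\xi/\log\Delta-\ell)=1$ on $[1,+\infty[$.

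The derivative bound follows from Faà di Bruno's formula. Since $(\log\xi/\log\Delta)^{(j)}=(-1)^{j-1}(j-1)!/(\xi^j\log\Delta)$ for $j\geq 1$, each term in Faà di Bruno produces exactly $\nu$ factors of $\xi^{-1}$ and at most $\nu$ factors of $(\log\Delta)^{-1}$, so
$$
b_{\ell,\Delta}^{(\nu)}(\xi)\ll_\nu \xi^{-\nu}\sum_{k=0}^{\nu}(\log\Delta)^{-k}\ll_\nu \xi^{-\nu}\max\bigl(1,(\log\Delta)^{-\nu}\bigr).
$$
To convert this into the form stated, I would invoke the elementary inequality $\log\Delta\geq (\Delta-1)/\Delta$ valid for $\Delta\geq 1$ (which follows from $e^x\geq 1+x$), giving $(\log\Delta)^{-\nu}\leq\Delta^\nu(\Delta-1)^{-\nu}$; trivially $1\leq\Delta^\nu(\Delta-1)^{-\nu}$ as well.

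There is no substantive obstacle; the only two points requiring care are choosing $G$ supported in the closed symmetric interval $[-1,1]$ (rather than, say, $[0,1]$), which is what cleanly truncates the partition to $\ell\geq 0$ on $[1,+\infty[$ without disturbing the identity, and ensuring the correct $\Delta$-dependence in the derivative constant, which is handled uniformly by the logarithmic inequality above.
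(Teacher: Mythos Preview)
The paper does not actually prove this lemma; it merely cites \cite[Lemme 2]{Fo85}. Your construction --- passing to the additive variable $t=\log\xi/\log\Delta$, pulling back a fixed translation-invariant partition of unity $G$ on $\Rr$, and controlling the derivatives via Fa\`a di Bruno together with the inequality $\log\Delta\geq(\Delta-1)/\Delta$ --- is the standard one and is correct in all details. In particular, your handling of the truncation to $\ell\geq 0$ (using that $G$, being smooth and supported in $[-1,1]$, vanishes at the endpoint $1$) and your tracking of the $\xi^{-\nu}(\log\Delta)^{-k}$ contributions in the Bell-polynomial expansion are both fine.
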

We take $\Delta$  slightly larger than $1$, namely
\begin{equation*} 
\Delta = 1+ \LL^{-B}
\end{equation*}
for some parameter $B\geq 1$.
\par
From now on, we denote by $M_i$, $1\leq i\leq 3$, some parameters of
the form 
\begin{equation}\label{eq-mi}
  M_i=\Delta^{\ell}=(1+\LL^{-B})^{\ell},
\end{equation}
where $\ell\geq 0$ is an integer. For such a variable
$M_i=\Delta^{\ell}$, we define
\begin{equation}\label{defVi}
V_{i}(t) = b_{\ell, \Delta}(t),
\end{equation}
where $b_{\ell, \Delta}$ are the functions given by Lemma
\ref{2.1}. Thus, the derivatives of $V_{i}$ satisfy
\begin{equation}\label{boundderiv}
  V_{i}^{(\nu)}(t) \ll_{\nu} t^{-\nu} \LL^{B\nu}.
\end{equation}
\par
The bound \eqref{boundderiv} implies the classical fact that $\widehat
V_{i}(\xi)$ decays quickly, namely
\begin{equation}\label{fourierdecay}
  \widehat V_{i}(\xi)\ll_{\nu} M_i \Bigl(\frac{\LL^B}{|\xi|M_i}\Bigr)^\nu,
\end{equation}
for all integers $\nu \geq 0$ and $\xi \not= 0$.
\par
For $\uple{M}=(M_1,M_2,M_3)$, we can now consider the smooth sums
\begin{equation}\label{defS}
  S(\uple{M}; q,a)=
  \underset{\substack{m_1m_2m_3 \equiv a \bmod q}}{\sum\ \sum\ \sum} 
  V_{1}(m_1)V_2(m_2) V_{3}(m_3),
\end{equation}
and 
\begin{equation}\label{defSS}
  S(\uple{M}) 
  =\underset{  m_1,m_2,m_3}{\sum\ \sum \ \sum}\, V_{1}(m_1) V_{2}(m_2)V_3(m_3).
\end{equation}
\par
Our preparation for Theorem~\ref{central} is given by the following
lemma:

\begin{lemma}\label{964}
For any $A>0$, we can select $B\geq 1$ such that
$$
S(x,q;a)-\frac{1}{q}S(x)=\sum_{\uple{M}}\Bigl(S(\uple{M};q,a)-\frac{1}{q}S(\uple{M})\Bigr)
+O_{\theta}(q^{-1}x\LL^{-A}),
$$
where $\uple{M}=(M_1,M_2,M_3)$ runs over triples of $M_i$ as above
such that
\begin{equation}\label{MN}
  x\LL^{-B} \leq M_1M_2M_3\leq x.
\end{equation}
\end{lemma}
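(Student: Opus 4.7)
The plan is to insert the smooth partition of unity from Lemma \ref{2.1} into $S(x;q,a)$ and $S(x)$ and then to track two small errors: the mismatch between the sharp cutoff $m_1m_2m_3\leq x$ and the smooth weights $V_i$ (which only affects a narrow ``transition zone'' of triples $\uple{M}$), and the excision of very small triples with $M_1M_2M_3 < x\LL^{-B}$.

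First, writing $1 = \prod_{i=1}^3 \sum_{\ell_i \geq 0} b_{\ell_i,\Delta}(m_i)$ for each integer $m_i \geq 1$ and exchanging orders of summation gives $S(x;q,a) = \sum_{\uple{M}} S_x(\uple{M};q,a)$ and $S(x) = \sum_{\uple{M}} S_x(\uple{M})$, where $S_x(\uple{M};q,a)$ and $S_x(\uple{M})$ denote the sums \eqref{defS} and \eqref{defSS} with the extra sharp constraint $m_1m_2m_3 \leq x$ imposed. Since $\mathrm{supp}(V_i) \subset [M_i/\Delta, M_i\Delta]$, this sharp constraint is automatic when $M_1M_2M_3 \leq x\Delta^{-3}$ (so $S_x = S$) and impossible when $M_1M_2M_3 > x\Delta^3$ (so $S_x = 0$). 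Writing
\[
S_x(\uple{M};q,a)-\frac{1}{q}S_x(\uple{M}) = \Bigl(S(\uple{M};q,a)-\frac{1}{q}S(\uple{M})\Bigr) - \Bigl(S^{>x}(\uple{M};q,a)-\frac{1}{q}S^{>x}(\uple{M})\Bigr),
\]
with $S^{>x}$ the restriction to $m_1m_2m_3 > x$, the terms with $M_1M_2M_3 > x\Delta^3$ cancel identically between the two pieces and we obtain
\[
S(x;q,a) - \frac{1}{q}S(x) = \sum_{M_1M_2M_3 \leq x\Delta^3}\Bigl(S(\uple{M};q,a)-\frac{1}{q}S(\uple{M})\Bigr) - \sum_{\uple{M}\in\mathcal{T}}\Bigl(S^{>x}(\uple{M};q,a)-\frac{1}{q}S^{>x}(\uple{M})\Bigr),
\]
where $\mathcal{T} := \{\uple{M} : x\Delta^{-3} < M_1M_2M_3 \leq x\Delta^3\}$ is the transition zone.

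Comparing with the target range $x\LL^{-B} \leq M_1M_2M_3 \leq x$, three discrepancies remain to estimate: the small tail $M_1M_2M_3 < x\LL^{-B}$, the upper transitional tail $x < M_1M_2M_3 \leq x\Delta^3$, and the $S^{>x}$ term over $\mathcal{T}$. By non-negativity of the $V_i$ and of $d_3$, each discrepancy is majorized by sums of the shape $\sum_{n \equiv a\mods{q},\, n \in I} d_3(n) + q^{-1}\sum_{n \in I} d_3(n)$, where $I$ is either an interval of length $\ll x\LL^{-B}$ near $x$ (using $\Delta^3 - 1 \ll \LL^{-B}$) or $I \subset [1, x\LL^{-B+O(1)}]$. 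Since $q$ is prime with $q \leq x^{99/100}$, a Shiu-type estimate for $d_3$ in arithmetic progressions gives $\sum_{n \equiv a\mods{q},\, n \in I} d_3(n) \ll q^{-1}|I|\LL^2$, and trivially $\sum_{n \in I} d_3(n) \ll |I|\LL^2$; thus the total error is $O_\theta(q^{-1}x\LL^{-B+2})$.

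Choosing $B \geq A + 2$ yields the bound $O_\theta(q^{-1}x\LL^{-A})$, as desired. The main obstacle is the combinatorial bookkeeping of which smoothed triples $\uple{M}$ genuinely contribute to the sharply cut sums, which is handled cleanly by the decomposition above; all analytic input is packaged in the standard Shiu-type divisor estimate for $d_3$ in arithmetic progressions.
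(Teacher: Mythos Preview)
Your proof is correct and follows essentially the same approach as the paper: insert the smooth partition of unity, then control the transition zone near $m_1m_2m_3=x$ and the small tail $M_1M_2M_3<x\LL^{-B}$ via Shiu's estimate for $d_3$ in short intervals of arithmetic progressions. Your explicit $S^{>x}$ decomposition is a slightly tidier bookkeeping of the transition zone than the paper's, but the substance is identical.
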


\begin{proof}
Using the partition of unity above, we have
\begin{align}\label{split}
  S(x;q,a)&= \underset{\uple{M}=(M_1,M_2,M_3)}{\sum\ \sum\ 
    \sum}S(\uple{M};q,a) +
  O\Bigl(\sum_{\substack{x\leq n \leq x\Delta^3\\
      n\equiv a \bmod q}} d_{3}(n)\Bigr)\nonumber\\
  &= \underset{\uple{M}=(M_1,M_2,M_3)}{\sum\  \sum\  \sum}S(\uple{M};q,a) +
  O(xq^{-1} \LL^{2-B}),
\end{align}
where the sum ranges over all the triples $\uple{M}=(M_1,M_2,M_3)$ of
the form above such that $M_1M_2M_3\leq x$, and the bound on the error
term is based on a classical estimate for the sum of the divisor
function in arithmetic progressions, restricted to an interval (see
\cite[Th. 2]{Sh} for instance).
\par
Similarly, the contribution to this sum of the triples $(M_1,M_2,M_3)$
satisfying $ M_1M_2M_3 \leq x\LL^{-B}$ satisfies
\begin{equation}\label{Err2}
  \underset{M_1M_2M_3\leq x\LL^{-B}}{\sum\ \sum\ 
    \sum}S(\uple{M};q,a)    \leq \sum_{\substack{1\leq n \leq 2 x\LL^{-B}\\ 
      n\equiv a \bmod q}}d_{3}(n)\ll  xq^{-1}  \LL^{2-B}.
\end{equation}
\par
Thus by selecting $B=B(A)$ large enough in \eqref{split} and
\eqref{Err2}, we get
\begin{equation*} 
  S(x;q,a)=\underset{(M_1,M_2,M_3)}{\sum\  \sum\  \sum}\ S(\uple{M};q,a) +
  O\bigl(xq^{-1} \LL^{-A}),
\end{equation*}
where the sum is over the triples $(M_1,M_2,M_3)$ such that~(\ref{MN})
holds. A similar result holds for the sum $S(x)$, and gives the
result.
\end{proof}

Due to the symmetry of the problem, it is natural to introduce the
following condition
\begin{equation}\label{decreasing}
M_3 \geq M_2 \geq M_1.
\end{equation}
\par
Since the number of triples $\uple{M}$ satisfying~(\ref{MN}) with
$M_i$ of the form~(\ref{eq-mi}) is $\ll \LL^{3B+3}$, Lemma \ref{964}
shows that~\eqref{reduction} (and hence Theorem \ref{central}) will
follow if we can show that for any $\theta<1/2+1/46$ and $A>0$, we
have
\begin{equation}\label{271}
  S(\uple{M};q,a) =\frac{1}{q} S(\uple{M}) + O_\theta (q^{-1} x \LL^{-A}),
\end{equation}
uniformly for all triples $\uple{M}=(M_1,M_2,M_3) $ satisfying
\eqref{MN}, \eqref{decreasing} and \eqref{eq-mi} and for all integers
$a$ coprime with $q$ satisfying $x^\frac{1}{100}\leq q \leq
x^\theta$. The proof of this is the object of the next section.

\section{Conclusion of the proof of Theorem \ref{central}}
\label{Conc}

The first two subsections below establish estimates for
$S(\uple{M};q,a)$ which are non-trivial in two different ranges,
depending on the sizes of $M_1$, $M_2$, $M_3$. In the last subsection,
we combine them to derive~(\ref{271}).
\par
In order to present cleanly the two cases, we introduce the parameters
$\kappa$ and $\mu_i$ defined by
\begin{equation}\label{exponents}
  q=x^\kappa\text{ and } 
  M_i=x^{\mu_i}\text{ for } 1\leq i \leq 3, 
\end{equation}
so that $\kappa$ and  $\mu_i$ satisfy
\begin{equation*} 
  1/100\leq \kappa \leq 99/100,
\end{equation*}
and
\begin{equation}\label{exponentsmui}
  1-B\frac{\log\LL}{\LL}\leq
  \mu_1+\mu_2+\mu_3\leq 1, \, \mu_{3} \geq \mu_2 \geq  \mu_{1}\geq 0,
\end{equation}
as a consequence of \eqref{q>x1/2}, \eqref{MN} and \eqref{decreasing}.
We also remind the reader that $q$ denotes a prime number.

\subsection{Applying the combined summation formula}

We apply the combined summation formula of
Corollary~\ref{cor-combined} to $S(\uple{M};q,a)$, which is of the
form treated there with $K(n)$ the characteristic function of the
residue class $a\bmod{q}$. We then have
$$
\fourier{K}(0)=\frac{1}{\sqrt{q}},
$$
and, for $(q,n)=1$,
$$
\bbessel{K}(x,n)=\frac{\hypk_3(anx;q)}{\sqrt{q}},
$$
by Lemma \ref{1060}.  We therefore get the equality
\begin{equation}\label{decompoS}
  S(\uple{M};q,a)=\tA+\tB+\tC+\tD,
\end{equation} 
as in Corollary~\ref{cor-combined}, and we proceed to handle these
four terms.
\par
First of all, we have
\begin{equation}\label{decompoA}
  \tA=\frac{1}{q} \sumtriple_\stacksum{(m_1m_2,q)=1}{m_3\geq 1}
  V_{1}(m_1)V_2(m_2)V_3(m_3)=
  \frac{1}{q}S(\uple{M})+O\Bigl(\frac{x}{q^2}\Bigr),
\end{equation} 
which represents the desired main term. We will now find conditions
which ensure that $\tB$, $\tC$ and $\tD$ are small. We will use the
inequality 
\begin{equation}\label{often}
\fourier{V}_{i}(t)\ll M_{i},
\end{equation}
several times (see \eqref{fourierdecay}).
\par
First, we have
\begin{equation}\label{B=}
  \tB=-\frac1{q^{2}}
  \sumtriple_\stacksum{m_1,m_2, n_3}{(n_3,q)=1}V_1(m_1) V_2(m_2)\widehat
  V_{3}\Bigl(\frac{n_3}{q}\Bigr),
\end{equation}
and by applying twice Lemma~\ref{Poisson}, we get
\begin{align}
  \sum_{(n_3,q)=1} \widehat V_{3}\Bigl(\frac{n_3}{q}\Bigr) &=
  \sum_{n_3}\widehat V_{3}\Bigl(\frac{n_3}{q}\Bigr) - \sum_{q\mid n_3}
  \widehat
  V_{3}\Bigl(\frac{n_3}{q}\Bigr) \nonumber\\
  &= q \sum_{t\equiv 0 \bmod q} V_3(t) -\sum_t V_3(t) \ll
  M_3,\label{V3}
\end{align}
by the properties of the function $V_3$. Inserting this bound in
\eqref{B=} and combining with \eqref{MN}, we deduce
\begin{equation}\label{Bll}
  \tB\ll q^{-2} x.
\end{equation}
\par
Similarly, using the definition
$$
\tC=\frac{1}{q^\frac{5}{2}} \Bigl\{ \widehat V_1(0)\sum_{n_2}\fourier
V_2\Bigl(\frac{n_2}{q}\Bigr)+\widehat V_2(0)\sum_{n_1}\fourier
V_1\Bigl(\frac{n_1}{q}\Bigr)-\widehat V_1(0)\widehat V_2(0)\Bigr\}
\times\sum_{(n_3,q)=1}\wihat V_3\Bigl(\frac{n_3}q\Bigr),
$$ 
a computation similar to \eqref{V3} leads to 
\begin{equation}\label{Cll}
\tC\ll q^{-\frac{5}{2}} x.
\end{equation}
\par
We must now only deal with $\tD$. By Lemma \ref{1060}, we can write
$$
\tD=\frac{1}{q^{2}}\sumtriple_{\substack{n_1n_2\not=0\\(n_3,q)=1}}
\fourier{V}_{1}\Bigl(\frac{n_1}q\Bigr)\fourier{V}_2
\Bigl(\frac{n_2}q\Bigr) \fourier{V}_3\Bigl(\frac{n_3}{q}\Bigr)
\hypk_3(an_1n_2n_3;q).
$$
\par
For fixed $n_3$, the sum over $n_1$ and $n_2$ can be handled using
Theorem~\ref{d(n)K(n)}, according to the remark
after~(\ref{eq-special-weight}), except that the Fourier transforms of
the functions $V_i$ are not compactly supported. To handle this minor
difficulty, we use again a partition of unity. Precisely, we apply
Lemma \ref{2.1} with parameter $\Delta =2$, deriving a decomposition
$$
\tD=\frac{1}{q^2}\sum_{\uple{N}} \mathcal{D}(\uple{N})
$$
where $\uple{N}$ runs over triples $\uple{N}=(N_1,N_2,N_3)$, $N_i$ are
integers of the form $2^{\ell}$ for some $\ell\geq 0$, and
$$
\mathcal{D}(\uple{N})= 
\sumtriple_{\substack{n_1n_2\not=0\\(n_3,q)=1}} \Bigl(
\fourier{V}_{1}\Bigl(\frac{n_1}q\Bigr)W_1 (n_1)\Bigl)\,
\Bigr(\fourier{V}_2 \Bigl(\frac{n_2}q\Bigr) W_2 (n_2)\Bigl)\, \Bigr(
\fourier{V}_3\Bigl(\frac{n_3}{q}\Bigr) W_3(n_3)\Bigl)
\hypk_3(an_1n_2n_3;q)
$$
where $W_j(t)=b_{\ell,2}(t)$, a smooth function supported in
$[N_j/2,N_j]$.
\par
The inequality \eqref{fourierdecay} implies that the coefficients $n_i
\mapsto \fourier{V}_i(n_i/q)$ decay quickly as soon as
\begin{equation*} 
  n_i>\tilde N_i= qM_i^{-1}x^{\eta},
\end{equation*}
where $\eta>0$ is arbitrary small.  Thus we get
\begin{equation}\label{1134}
  \tD=\frac{1}{q^2} 
  \sum_{\stacksum{\uple{N}}{N_i\leq \tilde N_i}} {\mathcal
    D}(\uple{N}) 
  +O_{\eta} (x^{-1}).
\end{equation}
\par
The sum over $\uple{N}$ contains $\ll \LL ^3$ terms. By this remark
and by the relations \eqref{q>x1/2}, \eqref{decompoS}, 
\eqref{decompoA}, \eqref{Bll}, \eqref{Cll} and \eqref{1134}, we see that it is
enough (in order to prove \eqref{271}) to show that
\begin{equation}\label{final?}  
  \mathcal{D}(\uple{N}) \ll_{\epsilon,
    A} q x\LL^{-A},
\end{equation}
for all $\epsilon >0$, all $A>0$, all $\uple{M}$ satisfying
\eqref{exponents} and \eqref{exponentsmui}, all $N_{i}\leq
\tilde{N}_{i}$ and all $q=x^\kappa$ where $1/100\leq \kappa \leq
12/23-\epsilon$.
\par
We apply Theorem \ref{d(n)K(n)} to the sum over $(n_1,n_2)$ in
$\mathcal{D}(\uple{N})$. This means that, in that result, we take
parameters
\begin{gather*}
(M_1,M_2)=(N_1,N_2),\quad\quad
K(n)=\hypk_{3}(an_3n;q)\\
V(x)=M_1^{-1}\fourier{V}_{1}(xN_1/q) W_{1}(xN_1), \quad
W(x)=M_2^{-1}\fourier{V}_{2}(xN_2/q) W_{2}(xN_2),
\end{gather*}
which ensure that~(\ref{condV}) holds with $Q=x^{2\eta}$, and we must
multiply the resulting bound by $M_1M_2$. 
\par
Since, in addition, we have already observed that the conductor of
$n\mapsto\hypk_{3}(an_3n;q)$ is bounded by an absolute constant, we
obtain the upper bound
\begin{equation*}
  \mathcal{D}(\uple{N})\ll_\eta M_1M_2M_3N_1N_2N_3
  \Bigl(1+\frac{q}{N_1N_2}\Bigr)^{1/2}q^{-\frac{1}{8}}x^{3\eta}
\end{equation*}
after applying Theorem \ref{d(n)K(n)} and summing trivially over
$n_{3}$.
\par
This bound is worst when $N_{i}=\tilde
N_{i}=qM_i^{-1}x^{\eta}$. Hence, using \eqref{exponentsmui}, this
implies
\begin{equation*}
 \mathcal{D}(\uple{N})\ll_\eta 
  \Bigl(1+\frac{x}{qM_{3}}\Bigr)^{1/2}q^{\frac{23}{8}}x^{6\eta}.
\end{equation*}
\par
It follows easily that \eqref{final?} is satisfied as soon as
\begin{equation}\label{coffee}
  \kappa\leq \frac{8}{15} - 4\eta\quad  \text{ and }\quad
  \mu_{3}\geq \frac{11}{4} \kappa -1 + 14 \eta.
\end{equation}
\par
This is our first estimate.
 
\subsection{Grouping variables}\label{thelastrick1}

In the totally symmetric situation where
$$
\mu_1=\mu_2=\mu_3=1/3
$$
the inequalities \eqref{coffee} are very restrictive and do not allow
to extend the value of the exponent of distribution beyond $1/2$.
Instead, we use Theorem \ref{th-type-3} (which builds on the
construction of a long variable by grouping two short ones).
\par
We obtain (see again~\eqref{often})
\begin{equation*} 
  \mathcal{D} (\uple{N})
  \ll_{\eta } (M_{1}M_{2}M_{3}) (N_{1}N_{2}N_{3})^\frac{1}{2}
  \Bigl(q^{-\frac{1}{2}} N_{1}N_{2}N_{3} +N_{1}N_{2}+q^\frac{1}{2}N_{3}
  \Bigr)^\frac{1}{2} x^\eta.
\end{equation*}
\par
The right-hand side is a non-decreasing function of the parameters
$N_{i}\leq \tilde N_{i}$, and it leads to
\begin{align*} 
  \mathcal{D} (\uple{N})&\ll_{\eta } x\cdot (q^3/x)^\frac{1}{2}
  \Bigl(q^{-\frac{1}{2}} (q^3/x) +M_{3}(q^2/x) +q^\frac{3}{2}
  M_{3}^{-1}
  \Bigr)^\frac{1}{2} x^{5\eta }\nonumber \\
  &\ll \Bigl( q^\frac{11}{4} + q^\frac{5}{2} M_{3}^\frac{1}{2}
  +q^\frac{9}{4}x^\frac{1}{2}M_{3}^{-\frac{1}{2}}\Bigr) x^{5\eta}.
\end{align*}
\par
This implies that \eqref{final?} is also satisfied when we have 
\begin{equation}\label{ineq}
  \kappa \leq  \frac{4}{7}-\eta, \quad\quad
  \frac{5}{2} \kappa -1 + 12 \eta  \leq \mu_{3}\leq  2-3 \kappa-12 \eta .
\end{equation}
 
\subsection{End of the proof of Theorem \ref{central}}
\label{sec-final}

For the final step, we combine the results of the last two
subsections. Choosing $\eta =\epsilon /10$ for $\epsilon>0$
very small, we see that whenever
$$
\kappa \leq 1/2 +1/46-\epsilon,
$$
we have
$$
\frac{11}{4} \kappa -1 + 14\eta   \leq  2-3 \kappa -12 \eta.
$$
\par
Looking at the conditions in \eqref{coffee} and \eqref{ineq}, we see
that the bound~\eqref{final?} holds provided that
\begin{equation*}
\mu_{3}\geq  \frac{5}{2} \kappa - 1 + 2\epsilon.
\end{equation*}
\par
But by \eqref{exponentsmui}, we have
$$
\mu_{3}\geq \frac{1}{3}- \frac{B\log \LL}{3 \LL}\geq \frac{5}{2}
\kappa - 1 + 2\epsilon
$$
for $x$ large enough. This completes the proof of Theorem
\ref{central}

\begin{remark}
  The exponent $1/2+1/46$ is best possible using only the conditions
  \eqref{coffee} and \eqref{ineq} that arise from Theorem
  \ref{d(n)K(n)} and Theorem \ref{th-type-3}). Indeed, neither applies
  to the triple $(\mu_{1}, \mu_{2}, \mu_{3})=(13/46, 13/46, 10/23)$.
\end{remark}

\section{Proof of Theorem \ref{onaverage}}\label{6} 

We will now prove Theorem~\ref{onaverage} concerning $d_3$ on integers
congruent to a fixed integer $a\not=0$, modulo $q$, on average over
$q\leq Q$.  We start by elementary reductions.
\par
In addition to the sums $S(\uple{M};q,a)$ and $S(\uple{M})$ which are
defined in \eqref{defS} and \eqref{defSS}, we also consider
$$
S^*(\uple {M},q) = 
\underset{\substack{(m_1m_2m_3 , q)=1}}{\sum\ \sum\ \sum} 
  V_{1}(m_1)V_2(m_2) V_{3}(m_3).
$$
\par
Then, for a prime $q$ satisfying \eqref{q>x1/2} and a triple $\uple
{M}$ satisfying \eqref{MN}, we have
\begin{equation*}
  \frac{1}{q}S(\uple{M}) = \frac{1}{\varphi (q)} S^*(\uple{M};q) + 
  O_{\epsilon }\Bigl( \frac{x^{\frac{99}{100}+\epsilon}}{q} \Bigr) 
\end{equation*}
(compare with \eqref{S=S*}). Using the reductions of \S \ref{prelred}
(in particular Lemma \ref{964}) and Theorem \ref{central}, we see that
Theorem \ref{onaverage} follows from the (equivalent) estimates
\begin{gather}
\label{night0}
\sum_{q\sim Q\atop q\text{ prime},\, q \nmid a} \Bigl\vert S(\uple{M};
q,a) -\frac{1}{q} S(\uple{M})\Bigr\vert  \ll x\LL^{-A},
\\
\label{night1}
\sum_{q\sim Q\atop q\text{ prime},\, q \nmid a} \Bigl\vert S(\uple{M};
q,a) -\frac{1}{\varphi (q)} S^*(\uple{M},q)\Bigr\vert \ll x\LL^{-A}
\end{gather}
are valid for every $A>0$ and $B>0$, every triple $\uple{M}$ (subject
to \eqref{MN}, \eqref{decreasing} and \eqref{eq-mi}), and all $Q$ in a
range
$$
x^{12/23-\alpha}\leq Q\leq x^{9/17-\alpha}
$$
for some $\alpha>0$, where the implied constant may depend only on
$(\alpha,A,B)$ (it would even be enough to do it for each $A$ with $B$
depending on $A$).
\par
We will establish these bounds in two steps: another individual
estimate for each $q$, which follows from the previous sections, and a
final bound on average for which we use Kloostermania~\cite{DI}.

\subsection{Reduction to Kloostermania} 

The first estimate is given by:

\begin{proposition}[Individual bound]\label{1275} 
  With notation as above, for $\uple{M}=(M_1,M_2,M_3)$ satisfying
  \eqref{MN} and \eqref{decreasing}, for every $B>0$, every $\eta>0$
  and $\alpha>0$ and every prime $q$ such that $x^{12/23-\alpha}\leq
  q\leq x^{9/17-\alpha}$ and
$$
q^\frac{5}{2} x^{-1+ \eta} \leq M_{3 }\leq q^{-3} x^{2- \eta}
\quad\text{ or } \quad M_{3}\geq q^\frac{11}{4} x^{-1 + \eta},
$$
we have
\begin{equation}\label{1280}
  S(\uple M; q,a) =
  \frac{1}{q} S(\uple M) + O \bigl( q^{-1} x^{1-\eta_1}),
\end{equation}
for some $\eta_1>0$ depending only on $\eta$, where the implied
constant depends only on $(\eta,\alpha,B)$.
\end{proposition}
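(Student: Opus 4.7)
The plan is to retrace the proof of Theorem~\ref{central} from Section~\ref{Conc}, while tracking an explicit power saving in $x$ instead of merely arbitrary powers of $\LL$. First, I would apply Corollary~\ref{cor-combined} to $S(\uple M;q,a)$ to get the decomposition $\tA+\tB+\tC+\tD$. As in equations~\eqref{decompoA},~\eqref{Bll} and~\eqref{Cll}, the term $\tA$ supplies the main contribution $q^{-1} S(\uple M)$ up to an error $O(x/q^2)$, while $\tB\ll x/q^2$ and $\tC\ll x/q^{5/2}$. Since $q\geq x^{12/23-\alpha}$, all three are of the permitted size $O(q^{-1}x^{1-\eta_1})$ for some $\eta_1=\eta_1(\alpha)>0$, so the proposition reduces to showing $\tD\ll q^{-1}x^{1-\eta_1}$.

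I would then decompose $\tD$ dyadically into the pieces $\mathcal{D}(\uple N)$ of Section~\ref{Conc}, truncated at $N_i\leq \tilde N_i=qM_i^{-1}x^{\eta'}$ for a sufficiently small $\eta'>0$, and bound these individually. In the first range $M_3\geq q^{11/4}x^{-1+\eta}$, Theorem~\ref{d(n)K(n)} delivers (just as in Section~\ref{Conc})
\begin{equation*}
\mathcal{D}(\uple N)\ll_\eta \Bigl(1+\frac{x}{qM_3}\Bigr)^{1/2} q^{23/8} x^{O(\eta')},
\end{equation*}
and the hypothesis on $M_3$ makes the second summand at most $q\cdot x^{1-\eta/2+O(\eta')}$, while the bound $q^{15/8}\leq x^{135/136-O(\alpha)}$ (coming from $q\leq x^{9/17-\alpha}$) controls the first. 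In the complementary range $q^{5/2}x^{-1+\eta}\leq M_3\leq q^{-3}x^{2-\eta}$, the grouping-of-variables approach of \S\ref{thelastrick1} using Theorem~\ref{th-type-3} yields
\begin{equation*}
\mathcal{D}(\uple N)\ll_\eta \bigl(q^{11/4}+q^{5/2}M_3^{1/2}+q^{9/4}x^{1/2}M_3^{-1/2}\bigr)x^{O(\eta')},
\end{equation*}
and the three terms are respectively controlled by $q^{7/4}\leq x^{63/68-O(\alpha)}$ and by the upper and lower bounds on $M_3$ in the hypothesis.

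Summing over the $O(\LL^3)$ dyadic pieces completes the bound on $\tD$ and proves~\eqref{1280}. There is no genuinely new obstacle compared with Section~\ref{Conc}: the argument is a bookkeeping exercise with the exponents $\kappa=\log q/\log x$ and $\mu_3=\log M_3/\log x$, essentially parallel to the derivation of~\eqref{coffee} and~\eqref{ineq}. The main (and easy) point to verify is that the strict inequalities $9/17<8/15$ and $9/17<4/7$ keep both Theorems~\ref{d(n)K(n)} and~\ref{th-type-3} applicable throughout $x^{12/23-\alpha}\leq q\leq x^{9/17-\alpha}$, and that the two hypotheses on $M_3$ in the statement are exactly the two ranges in which one of those theorems delivers a saving that is a power of $x$ rather than merely of $\LL$.
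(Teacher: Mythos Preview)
Your proposal is correct and follows exactly the paper's approach: the paper's own proof is the one-liner ``This is an immediate consequence of \eqref{coffee} and \eqref{ineq},'' and your write-up simply unpacks that sentence by re-running the computations of Section~\ref{Conc} and observing that the bounds obtained there are in fact power savings in $x$. The two displayed hypotheses on $M_3$ are precisely the ranges encoded in \eqref{ineq} and \eqref{coffee} respectively, and $q\leq x^{9/17-\alpha}$ guarantees the accompanying constraints $\kappa<8/15$ and $\kappa<4/7$ with room to spare, so nothing beyond the bookkeeping you describe is required.
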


\begin{proof} 
This is an immediate consequence of \eqref{coffee} and
\eqref{ineq}.
\end{proof}

Our second estimate is on average over $q$; we will obtain stronger
bounds, and we do not require $q$ to be restricted to primes, but on
the other hand, we now need to fix $a$.

\begin{proposition}[Average bound]\label{proponaver}
  Let $a\not=0$ be a fixed integer.  For every $\eta>0$ there exists
  $\eta_1>0$, depending only on $\eta$, such that for every
  $\uple{M}$ as above satisfying
\begin{equation}\label{11/23>}
x^\frac{11}{23}\geq M_{3}\geq M_{2}\geq M_{1},
\end{equation}
and for every $Q$ such that
\begin{equation}\label{ineqQ}
  Q\leq x^{-\eta} \min \bigl\{xM_3^{-1}, 
  x^{-\frac{1}{2}} M_3^\frac{5}{2}, x^\frac{1}{4} M_3^\frac{3}{4} 
  \bigr\}, 
\end{equation}
we have
\begin{equation}\label{1310}
  \sum_{q\sim Q\atop (q,a)=1} 
  \Bigl\vert S(\uple{M}; q,a) -\frac{1}{\varphi (q)} S^*(\uple{M},q)
  \Bigr\vert \ll x^{1-\eta_1}
\end{equation}
where the implied constant depends only on $(a,B,\eta)$.
\end{proposition}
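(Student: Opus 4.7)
The strategy mirrors the proof of Theorem \ref{central} in Section \ref{Conc}, except that the pointwise bounds for sums of trace functions will be replaced by spectral bounds of Deshouillers--Iwaniec type, thereby exploiting the additional average over $q\sim Q$.

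First I would apply the Poisson--Voronoi decomposition of Corollary \ref{cor-combined} to $S(\uple M;q,a)$ for each $q\sim Q$ coprime to $a$, exactly as in \eqref{decompoS}. As in \S\ref{Conc}, the contributions of $\tA$, $\tB$ and $\tC$ yield the expected main term $\varphi(q)^{-1}S^*(\uple M,q)$ up to a total error which is negligible after summing over $q\sim Q$ (using the trivial identity $q^{-1}S(\uple M)=\varphi(q)^{-1}S^*(\uple M,q)+O_\varepsilon(x^{99/100+\varepsilon}/q)$ to switch between the two natural main terms). This reduces \eqref{1310} to establishing
$$
\sum_{q\sim Q,\,(q,a)=1}|\tD(q)|\ll x^{1-\eta_1},
$$
where, after dyadic truncation at $\tilde N_i=qM_i^{-1}x^\eta$ (justified by the rapid decay of $\fourier{V_i}$), the remainder $\tD(q)$ is a smoothly weighted triple sum of $\hypk_3(an_1n_2n_3;q)$ in the variables $n_1,n_2,n_3$ (cf. Lemma \ref{1060}).

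Next I would open the hyper-Kloosterman sum via
$$
\hypk_3(m;q)=\frac{1}{q}\sumsum_{y_1,y_2\in\Fpt}e\Bigl(\frac{y_1+y_2+m\ov{y_1y_2}}{q}\Bigr)
$$
and perform the standard change of variables (absorbing $an_1n_2$ into the inverse inside the exponential) to expose a classical Kloosterman sum $S(\ast,\ast;q)$ of modulus $q$. An application of Cauchy--Schwarz in the outer variable $n_3$ (whose dual length $\tilde N_3\asymp Q/M_3$ is the smallest among $\tilde N_1,\tilde N_2,\tilde N_3$) reduces the problem to a bilinear form
$$
\sum_{q\sim Q}\ \sumsum_{m,n}\alpha_m\beta_n\,g(q,m,n)\frac{S(am,n;q)}{\sqrt q}
$$
with smooth $g$ and $\ell^\infty$-bounded coefficient sequences $\alpha,\beta$ supported on dyadic intervals. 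Such bilinear forms are precisely those controlled by the Kuznetsov trace formula and the spectral large sieve inequalities of Deshouillers--Iwaniec \cite{DI}, as developed in the work of Bombieri--Fouvry--Iwaniec \cite{BFI}.

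The three upper bounds $xM_3^{-1}$, $x^{-1/2}M_3^{5/2}$ and $x^{1/4}M_3^{3/4}$ appearing in \eqref{ineqQ} are precisely the thresholds at which the three natural Deshouillers--Iwaniec bounds (separating the Maass, holomorphic and Eisenstein contributions, and optimizing the exchange between the $q$-average and the $(m,n)$-average) become non-trivial, once the effective lengths of the dual variables are inserted. The constraint \eqref{11/23>} then guarantees that, throughout the allowed range, at least one of the three regimes yields a saving of a uniform power $x^{\eta_1}$. The main obstacle will be the combinatorial bookkeeping: carefully bringing the hyper-Kloosterman sum into a bilinear Kloosterman form whose coefficients have controlled $\ell^2$-norm, and checking that the three regimes cover the full range of $M_3$ permitted by \eqref{11/23>} uniformly in the remaining parameters; the fixed integer $a$ enters only as an additive twist of the spectral sums and is absorbed into the implicit constants.
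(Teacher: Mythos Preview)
Your approach is not the one taken in the paper, and it has a genuine gap.

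The paper does \emph{not} apply the Poisson--Voronoi formula to prove Proposition~\ref{proponaver}. Instead, the key idea is a \emph{switching of moduli}: the congruence $m_1m_2m_3\equiv a\bmod q$ in $S(\uple{M};q,a)$ is rewritten as
\[
qr\equiv -a\bmod m_1m_2,\qquad m_3=\frac{qr+a}{m_1m_2},
\]
so that $q$ is no longer the modulus but one of the summation variables, while $m_1m_2$ becomes the modulus. After a Mellin transform to separate variables, one is in position to apply \cite[Theorem~5]{BFI} (stated here as Proposition~\ref{ThmBFI}) with the dictionary $(Q,R,N,M)\leftrightarrow(M_2,M_1,Q,R)$, $R=M_1M_2M_3/Q$. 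The three constraints in \eqref{ineqQ} are exactly what the four conditions \eqref{crazycond} of Bombieri--Friedlander--Iwaniec reduce to after this substitution and the inequalities $M_1\leq M_2\leq M_3$, $M_1M_2M_3\leq x$; they are \emph{not} thresholds coming from a Maass/holomorphic/Eisenstein trichotomy as you suggest. The main term emerging from Proposition~\ref{ThmBFI} is then matched against $\Sigma_1$ via Lemma~\ref{coprimewithq}.

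Your proposed route runs into trouble at the step ``open $\hypk_3$ \ldots\ to expose a classical Kloosterman sum $S(\ast,\ast;q)$''. Writing
\[
\hypk_3(an_1n_2n_3;q)=\frac{1}{q}\sum_{y\in\Fpt}e\Bigl(\frac{y}{q}\Bigr)S(an_1n_2\bar y,\,n_3;q)
\]
does produce a classical Kloosterman sum of modulus $q$, but at the cost of an extra variable $y$ ranging over a full set of residues of length $q$; there is no ``standard change of variables'' that removes it. After Cauchy--Schwarz in $n_3$ (or in $(q,n_3)$) one does not arrive at a bilinear form $\sum_q\sum_{m,n}\alpha_m\beta_n\,S(am,n;q)$ with coefficients of acceptable $\ell^2$-norm: either the $y$-sum inflates the effective length of one side by a factor $q$, or the square introduces products $\hypk_3\cdot\overline{\hypk_3}$ that are not Kloosterman sums at all. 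In either case the Deshouillers--Iwaniec estimates do not obviously yield the precise range \eqref{ineqQ}. The missing idea is the modulus switch, which turns the average over $q$ into a genuinely smooth inner variable for the Kloostermania and makes the BFI machinery directly applicable.
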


Before giving the proof, we combine these two results:

\begin{proof}[Proof of~\emph{(\ref{night0})}
  and~\emph{(\ref{night1})}]
Summing \eqref{1280} over all primes $q\sim Q$, we 
obtain \eqref{night0} when
\begin{equation}\label{01/2}
  x^{12/23-\alpha}\leq  Q \leq x^{9/17-\alpha}
\end{equation}
for some fixed $\alpha>0$ and
\begin{equation*}
  Q^\frac{5}{2} x^{-1+ \eta} \leq M_{3}
  \leq Q^{-3} x^{2- \eta} \text{ or } M_{3}\geq Q^\frac{11}{4} 
  x^{-1   + \eta},
\end{equation*}
for some fixed $\eta>0$.
\par
Fixing $\alpha>0$ and $\eta=\alpha$, assuming that~(\ref{01/2}) holds,
it is therefore enough to show that~(\ref{night1}) holds when
\begin{equation}\label{condM3}
Q^{-3} x^{2- \alpha} \leq  M_3 \leq Q^\frac{11}{4} x^{-1 +  \alpha}
\end{equation}
\par
We claim that under these assumptions, if $\alpha$ is small enough,
Proposition~\ref{proponaver} can be applied for the value of the
parameter $\eta=\alpha/2$. We then derive \eqref{1310} by
Proposition~\ref{proponaver} for some $\eta_1>0$, and this is stronger
than~(\ref{night1}).
\par 
To check the claim, note first that the condition~(\ref{11/23>}) is
clear from the assumptions \eqref{01/2} and \eqref{condM3} if $\alpha$
is small enough. Moreover
\begin{itemize}
\item[-] since $M_{3}\leq Q^\frac{11}{4}x^{-1 +\alpha}$ and
  $Q\leq x^\frac{9}{17}$, we have $M_3Q\leq x^{1-1/68+\alpha}\leq 
  x^{1-\eta}$ for $\alpha$ small enough; 
\item[-] since $M_{3}\geq Q^{-3} x^{2-\alpha}$ and $Q\leq
  x^{\frac{9}{17}-\alpha}$, we have $Q\leq
  x^{-\frac{1}{2}-\eta}M_{3}^\frac{5}{2}$, and also $Q \leq
  x^{\frac{1}{4} -\eta}M_{3}^\frac{3}{4}$.
\end{itemize}
\par
This means that~(\ref{ineqQ}) is also valid, as claimed.
\end{proof}

\section{Proof of Proposition \ref{proponaver}}  

We denote by $\Sigma (Q,\uple{M},a)$ on the left-hand side of
\eqref{1310}. Denoting further by $c_{q}$ the sign of the difference
$$
S(\uple{M};q,a) -\frac{S^*(\uple{M})}{\varphi (q)}
$$ 
when $(q,a)=1$, and putting $c_{q}=0$ when $a$ is not coprime to $q$,
we can write
$$
\Sigma (Q,\uple{M},a)=\Sigma_{0} (Q,\uple{M},a) - \Sigma_{1}
(Q,\uple{M}, a),
$$
where
\begin{equation}\label{1341}
  \Sigma_0 (Q,\uple{M},a)= \sum_{q\sim Q} c_{q} S(\uple M;q,a),\quad\quad
  \Sigma_1(Q,\uple{M},a)=\sum_{q\sim Q} \frac{c_{q}}{\varphi (q)}
  S^*(\uple M;q). 
\end{equation}

\subsection{Evaluation  of $\Sigma_{1} (Q,\uple{M},a)$}  
In this section we obtain an asymptotic formula for $\Sigma_1$.

\begin{lemma} 
\label{6.4} 
With notation and assumptions as above, for any complex numbers
$\sigma_q$ with $|\sigma_q|\leq 1$, we have
$$
\sum_{q\sim Q} \frac{\sigma_{q}}{\varphi (q)} S^*(\uple M;q) =
\fourier{V}_1 (0) \fourier{V}_2 (0) \fourier{V}_3 (0) \sum_{q\sim Q}
\frac{\sigma_q}{\varphi (q)}\cdot \Bigl(\frac{\varphi (q)}{q} \Bigr)^3 +O
\bigl(M_2M_3 d^3 (q) \LL^{6B} \bigr),
$$
where $V_i$ are the functions appearing in the definition of
$S(\uple{M};q,a)$.
\end{lemma}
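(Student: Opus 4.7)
The plan is to factorize $S^*(\uple M;q)$ into a product of three independent one-variable sums, evaluate each asymptotically via M\"obius inversion combined with Poisson summation, and then expand the product. Since the coprimality conditions decouple,
$$S^*(\uple M;q) = T^*_1(q)\, T^*_2(q)\, T^*_3(q), \qquad T^*_i(q) := \sum_{(m_i,q)=1} V_i(m_i).$$

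For each factor, I would insert $\mathbf{1}_{(m,q)=1}=\sum_{d\mid(m,q)}\mu(d)$ to obtain
$$T^*_i(q)=\sum_{d\mid q}\mu(d)\sum_{d\mid m}V_i(m),$$
and apply Lemma \ref{Poisson} to the inner sum, obtaining $d^{-1}\sum_{n\in\Zz}\fourier{V}_i(n/d)$. The $n=0$ frequency, summed via $\sum_{d\mid q}\mu(d)/d = \varphi(q)/q$, produces the expected main term $\fourier{V}_i(0)\,\varphi(q)/q$. To bound the nonzero frequencies, I would use the rapid decay \eqref{fourierdecay}: for $d\mid q$ with $d$ much smaller than $M_i$, taking $\nu$ large in \eqref{fourierdecay} makes the tail negligible, while for divisors $d\gtrsim M_i$ (of which there are only $O(\log q)$, since $V_i$ is supported on $[M_i/\Delta, M_i\Delta]$), the inner sum has $O(1)$ terms and can be bounded trivially against the matching main term $\fourier{V}_i(0)/d$. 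This should produce
$$T^*_i(q)=\fourier{V}_i(0)\,\frac{\varphi(q)}{q}+R_i(q), \qquad |R_i(q)|\ll d(q)\,\LL^{2B}.$$

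Next I would expand the difference $T^*_1 T^*_2 T^*_3 - \prod_i \fourier{V}_i(0)\varphi(q)/q$ telescopically via
$$abc - a'b'c' = (a-a')bc + a'(b-b')c + a'b'(c-c'),$$
combined with the crude bounds $T^*_i(q),\, \fourier{V}_i(0)\ll M_i$. Under the hypothesis $M_1\leq M_2\leq M_3$, the largest of the three products $M_2M_3,\ M_1M_3,\ M_1M_2$ is $M_2M_3$, so the three ``linear'' error contributions are each $O(M_2M_3\, d(q)\,\LL^{2B})$; the mixed contributions with two or three factors $R_i$ are smaller in the relevant regime and can be absorbed into an overall bound $O(M_2M_3\, d(q)^3\,\LL^{6B})$ per prime $q$. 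Weighting by $\sigma_q/\varphi(q)$ with $|\sigma_q|\leq 1$ and summing over $q\sim Q$ yields the stated identity.

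The main technical nuisance is the regime where $d\mid q$ is comparable to $M_i$, since Poisson decay collapses there; the rescue is the compact support of $V_i$, which forces $\sum_{d\mid m}V_i(m)$ to have $O(1)$ terms and restricts the number of such divisors to $O(\log q)$. No deeper input than Poisson summation and the Fourier decay \eqref{fourierdecay} is required; the rest is bookkeeping against the product structure.
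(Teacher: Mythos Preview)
Your proposal is correct and follows essentially the same route as the paper: the paper deduces Lemma~\ref{6.4} immediately from the auxiliary Lemma~\ref{coprimewithq} (which is exactly your formula $T^*_i(q)=\fourier{V}_i(0)\,\varphi(q)/q+O(d(q)\LL^{2B})$, proved by the same M\"obius--Poisson argument with the decay~\eqref{fourierdecay}), combined with the factorization $S^*(\uple M;q)=\prod_i T^*_i(q)$ and the ordering $M_1\leq M_2\leq M_3$. One small slip: the number of divisors $d\mid q$ with $d\gtrsim M_i$ is bounded only by $d(q)$, not by $O(\log q)$ (the support of $V_i$ is irrelevant here, and $q$ need not be prime in this section), but since each such divisor contributes $O(1)$ your stated bound $|R_i(q)|\ll d(q)\,\LL^{2B}$ is unaffected.
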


In view of the definition (and the fact that $M_1\leq M_2\leq M_3$),
this follows from the following lemma, which we state in slightly
greater generality for later use:

\begin{lemma}\label{coprimewithq}
  Let $V=V_i$ for some $1\leq i\leq 3$ as in~\emph{(\ref{defVi})}. Then
  for any integer $u\geq 1$ and any integer $q\geq 1$, we have
$$
\sum_{(m_i,q)=1} V_i(um_i) = \frac{ \varphi (q)}{qu} \fourier{V}_i (0)
+ O\bigl( d(q) \LL^{2B}\bigr).
$$
\end{lemma}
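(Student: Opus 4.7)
The plan is to strip off the coprimality condition by Möbius inversion and then evaluate each resulting unrestricted sum by Poisson summation; the zero frequency terms assemble into the main term, and the nonzero frequencies contribute a negligible error thanks to the rapid decay of $\fourier{V}_i$ recorded in~\eqref{fourierdecay}.

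Concretely, writing $\mathbf{1}_{(m,q)=1} = \sum_{d\mid \gcd(m,q)} \mu(d)$ and substituting $m=dm'$, I would obtain
$$
\sum_{(m,q)=1} V_i(um) = \sum_{d\mid q} \mu(d) \sum_{m' \geq 1} V_i(udm').
$$
Since $V_i$ is supported in $[M_i/\Delta,M_i\Delta]\subset\,]0,+\infty[$, the inner sum coincides with the sum over all $m'\in\Zz$, so Poisson summation (which is the special case $K\equiv 1$ of Lemma~\ref{Poisson}, applied to $m'\mapsto V_i(udm')$) gives
$$
\sum_{m' \in \Zz} V_i(udm') = \frac{\fourier{V}_i(0)}{ud} + \frac{1}{ud}\sum_{n\neq 0}\fourier{V}_i\Bigl(\frac{n}{ud}\Bigr).
$$
Collecting the $n=0$ contributions with weights $\mu(d)$ and using $\sum_{d\mid q}\mu(d)/d = \varphi(q)/q$ yields the main term $\fourier{V}_i(0)\varphi(q)/(qu)$.

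For the error I would invoke~\eqref{fourierdecay} with $\nu=2$ together with the trivial bound $|\fourier{V}_i(\xi)|\leq \|V_i\|_1\ll M_i/\LL^B$ for small $|\xi|$. Splitting the sum at $|n|\asymp \LL^B ud/M_i$ gives
$$
\frac{1}{ud}\sum_{n\neq 0}\Bigl|\fourier{V}_i(n/(ud))\Bigr|\ll \LL^B,
$$
uniformly in $u$ and $d\mid q$, so that summing over the $d(q)$ divisors of $q$ produces a total error $O(d(q)\LL^B)$, which is well within the claimed bound $O(d(q)\LL^{2B})$. The only mildly delicate regime is $ud\gg M_i/\LL^B$, where $m'\mapsto V_i(udm')$ has $O(1)$ integers in its support and $|\fourier{V}_i(0)/(ud)|\ll 1$; there one simply bounds both the inner sum and its would-be main term trivially by $O(1)$ per divisor, and the same conclusion follows. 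Nothing in the argument is deep — it is a clean Möbius-plus-Poisson computation, and I do not anticipate any genuine obstacle.
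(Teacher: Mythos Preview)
Your proposal is correct and follows essentially the same path as the paper's proof: M\"obius inversion over $d\mid q$, then Poisson summation in the remaining variable, with the error controlled via~\eqref{fourierdecay} for $\nu=2$. The paper's write-up simply restricts at the outset to $d\leq 2M_i/u$ (since larger $d$ give an empty sum) and applies the $\nu=2$ bound uniformly to all nonzero frequencies, yielding directly $O(d(q)\LL^{2B})$; your extra splitting at $|n|\asymp \LL^B ud/M_i$ with the trivial $L^1$ bound $\|V_i\|_1\ll M_i\LL^{-B}$ even sharpens this to $O(d(q)\LL^B)$, which is more than enough.
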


\begin{proof}
  If we write $W(t)=V_i(tu)$ for $t\in\Rr$, we see that $W(t)=0$ for
  $\vert t\vert \geq 2M_i/u$ and that $\fourier{W}(t)=(1/u)
  \fourier{V}_i (t/u)$. We then apply the M\"obius inversion formula,
  the Poisson formula (Lemma~\ref{Poisson}) and~\eqref{fourierdecay}
  (with $\nu =2$) to get
  \begin{align*}
    \sum_{(m_i,q)=1} V_i(um_i)&= \sum_{d\mid q\atop d\leq 2M_i/u} \mu
    (d) \sum_{d\mid m_i} W(m_i)= \sum_{d\mid q\atop d\leq 2M_i/u}
    \frac{\mu (d)}{du}  \sum_{ n} \fourier{V}_i\bigl( \frac{n}{du}\bigr)\\
    &= \sum_{d\mid q\atop d\leq 2M_i/u} \frac{\mu (d)}{du} \Bigl\{
    \fourier{V}_i (0) +O\Bigl( M_i\sum_{\vert n \vert \geq 1} \bigl(
    dun^{-1} M_i^{-1} \LL^B \bigl)^2\Bigr)\Bigr\},
\end{align*}
and the lemma follows after summing over $n$ and $d$.
\end{proof}

\subsection{Application of Kloostermania} 

The treatment of $\Sigma_0 (Q, \uple{M},a)$ is more intricate.
Obviously the problem of proving \eqref{1310} essentially deals with
the average distribution of the convolution of two (or three)
arithmetic functions in arithmetic progressions. Thirty years ago,
this problem was considered in a series of papers by Bombieri, Fouvry,
Friedlander and Iwaniec (see in particular \cite{FoIw,Fo84,BFI,BFI2})
with the purpose of improving the exponent $1/2$ in the classical
Bombieri--Vinogradov Theorem concerning the distribution of
primes in arithmetic progressions (see \cite[Theorem 17.1]{IK} for
instance).
\par
These investigations resulted in several variants of the
Bombieri--Vinogradov Theorem, with \emph{well--factorable
  coefficients} in the averaging and with exponents of distribution
\emph{greater} than $1/2$, culminating with the exponent $4/7$
(\cite[Theorem 10]{BFI}). The crucial ingredient was the use of the
so--called \emph{Kloostermania}, i.e., estimates for sums of
Kloosterman sums arising from the Kuznetsov formula and from the
spectral theory of modular forms on congruence subgroups, which was
developed in the seminal work of Deshouillers and Iwaniec \cite{DI}.
\par
Among the currently known results, the following estimate is well
suited to our problem:

\begin{proposition}[Bombieri--Friedlander--Iwaniec]\label{ThmBFI}
  Let $a\not=0$ be an integer.  Let $f$ be a $C^1$ complex-valued
  function defined on $\Rr$ with $|f|\ll 1$. For every $\eta>0$ there
  exists $\eta_1>0$, depending only on $\eta$, such that for every
  sequences $(\gamma_q)$, $(\delta_r)$ and $(\beta_n)$ of complex
  numbers of modulus at most $1$ and for every parameters
$$
x,M,N,Q,R\geq 1
$$
such that $QR<x$, $MN=x$ and
\begin{equation}\label{crazycond}
x^{1-\eta} >M> x^\eta \max\bigl\{
Q, x^{-1} QR^4, Q^\frac{1}{2} R, x^{-2} Q^3 R^4
\bigr\},
\end{equation}
we have
\begin{equation*} 
  \underset{q\sim Q\ r\sim R\atop (qr,a) =1 }
  {\sum\  \sum} \gamma_q \delta_r \Bigl(
  \underset{m\sim M\ n\sim N\atop mn\equiv a \bmod qr}{\sum\  \sum}
  \beta_nf(m) 
  -
  \frac{1}{\varphi (qr)}
  \underset{m\sim M\ n\sim N\atop (mn,  qr)=1}{\sum\  \sum} \beta_nf(m)
  \Bigr) = O\Bigl(x^{1-\eta_1}(1+\sup_{|t|\sim M}|f'(t)|)\Bigr),
\end{equation*}
where the implied constant depends only on $\eta$, $a$ and
$\sup_{t}|f(t)|$. 
\end{proposition}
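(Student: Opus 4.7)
The plan is to establish this proposition by the dispersion method of Linnik combined with the spectral estimates for sums of Kloosterman sums of Deshouillers--Iwaniec \cite{DI}, following the approach developed by Bombieri, Friedlander and Iwaniec \cite{BFI,BFI2}. The key structural feature is that the modulus is presented as a bilinear product $qr$; this is exactly the \emph{well-factorable} shape that Kloostermania can exploit, and it is what permits averaging the error term up to a range larger than the Bombieri--Vinogradov threshold $1/2$.

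First I would detect the congruence $mn\equiv a\bmod{qr}$, subtract the expected main term $\varphi(qr)^{-1}\mathbf{1}_{(mn,qr)=1}$, and apply the Cauchy--Schwarz inequality in the $n$-variable to eliminate the unspecified coefficients $\beta_n$ at the cost of a factor $N^{1/2}$. Expanding the resulting square produces a sextuple sum over $(q_1,q_2,r_1,r_2,m_1,m_2)$, with an inner sum over $n\sim N$ lying in two simultaneous arithmetic progressions modulo $q_1r_1$ and $q_2r_2$. The diagonal contribution (where $q_1r_1=q_2r_2$) is evaluated asymptotically by Poisson summation in $n$ and cancels against the corresponding expansion of the subtracted main term, up to admissible errors coming from the smoothness of $f$ and the trivial bound for $|f'|$.

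The off-diagonal contributions form the heart of the matter. After a further Poisson summation in $n$ (using that $N=x/M$ is long, by the lower bound for $M$ in \eqref{crazycond}), they reduce to multi-dimensional sums of incomplete Kloosterman sums to moduli dividing $q_1q_2r_1r_2$. These break into sub-cases according to which of the coincidences $q_1=q_2$ and $r_1=r_2$ occurs, and each sub-case falls under one of the fundamental bounds of \cite{DI}, proved via the Kuznetsov trace formula and the spectral large sieve for Fourier coefficients of modular forms on $\Gamma_0(qr)$. The four distinct quantities inside the maximum in~\eqref{crazycond} correspond exactly to the four thresholds at which the resulting Kloosterman estimates become non-trivial.

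The main obstacle will be the careful bookkeeping to verify that every off-diagonal configuration saves a power of $x$ uniformly in $\gamma_q$, $\delta_r$, $\beta_n$ and $f$, under the precise hypothesis~\eqref{crazycond}. The most delicate sub-case is the fully generic one $q_1\ne q_2$, $r_1\ne r_2$, which is where the bilinearity of the modulus is used crucially in order to factor the arising Kloosterman sums into shapes amenable to \cite{DI}; this case produces the strongest constraint $M>x^{\eta}\cdot x^{-2}Q^3R^4$ in \eqref{crazycond}. Once the bookkeeping is complete, the quantitative saving $\eta_1>0$ is guaranteed by any unconditional approximation to Selberg's eigenvalue conjecture (e.g. the Kim--Sarnak bound, or even Selberg's $3/16$), whose dependence on the level is polynomial and thus absorbed by the power saving.
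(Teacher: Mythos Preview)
Your outline is a correct high-level sketch of how \cite[Theorem~5]{BFI} itself is proved: Linnik dispersion via Cauchy--Schwarz in $n$, Poisson summation in $n$, and then the Deshouillers--Iwaniec spectral bounds for sums of Kloosterman sums, with the four quantities in the maximum of \eqref{crazycond} arising from the various coincidence patterns among $(q_1,q_2,r_1,r_2)$. The paper, however, takes a much shorter route. It observes that the case $f\equiv 1$ is exactly \cite[Theorem~5]{BFI}, which it quotes as a black box (noting only that the argument on pp.~235--236 of \cite{BFI} goes through unchanged when the smooth variable $m$ is restricted to an arbitrary subinterval of $[M,2M]$); the extension to a general $C^1$ function $f$ is then a one-line partial (Abel) summation in $m$, and this summation by parts is precisely what introduces the factor $1+\sup_{|t|\sim M}|f'(t)|$ on the right-hand side. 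So you are re-deriving the cited theorem from scratch, whereas the paper simply invokes it and passes to general $f$ by partial summation. Your route would also yield the proposition, at the cost of several pages rather than a paragraph; the paper's route buys brevity but offers no insight into where \eqref{crazycond} comes from.

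One minor slip in your sketch: you justify ``$N=x/M$ is long'' by the \emph{lower} bound on $M$ in \eqref{crazycond}, but a lower bound on $M$ gives an upper bound on $N$; it is the hypothesis $M<x^{1-\eta}$ that forces $N>x^{\eta}$.
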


\begin{proof}
  This follows very easily from~\cite[Theorem 5]{BFI}, which is the
  case $f=1$, after summation by parts; one should just notice that
  the argument in~\cite[p. 235, 236]{BFI} applies equally well when
  $\alpha_m=1$ for $m$ in a sub-interval $I\subset [M,2M]$ and
  $\alpha_m=0$ for $m\sim M$ and $m\notin I$.
\end{proof}

In order to apply this proposition we need to transform
$\Sigma_0(Q,\uple{M},a)$.  For this purpose, we use a trick already
present in \cite[p. 75]{Fo85} (for instance), which consists in
rewriting a congruence to a different modulus: the congruence
$$
m_{1}m_{2}m_{3}\equiv a \bmod q
$$
which appears in our sum $S(\uple{M};q,a)$ (see \eqref{defS}) is
reinterpreted as
\begin{equation}\label{cong}
 qr\equiv -a \bmod m_{1}m_{2}.
\end{equation}
\par
A technical point is that we must preserve the coprimality condition
$(m_{1}m_{2},a)=1$. To avoid complication, we begin with the case
$a=1$, where this technical issue does not arise, and postpone a short
discussion of the general case to Section~\ref{generala}.
\par
For $a=1$, we therefore write
\begin{equation}\label{(6.12)} 
  \Sigma_{0} (Q,\uple{M},1) =
  \underset{m_1 \ \ m_2 }{\sum \ \ \sum}\ V_{1}(m_{1}) V_{2}(m_{2})
  \underset{q\sim Q, \, r\atop qr \equiv -1 \bmod m_{1}m_{2}}{\sum\
    \sum} c_{q}V_{3}\Bigl(\frac{qr +1}{m_{1}m_{2}} \Bigr) .
\end{equation}
\par
By~\eqref{boundderiv} (with $\nu =1$) and~\eqref{MN}, we
have 
$$
V_{3}\Bigl(\frac{qr +1}{m_{1}m_{2}}\Bigr)= V_{3}\Bigl(\frac{qr}
{m_{1}m_{2}}\Bigr) +O\bigl(x^{-1} \LL^{2B}\bigr),
$$ 
and hence
\begin{equation}\label{1360}
  \Sigma_{0} (Q,\uple{M},1) = \underset{m_{1}\ \ m_{2}}{\sum\
    \sum}V_{1}(m_{1}) V_{2}(m_{2}) 
  \underset{q\sim Q, \, r\atop qr \equiv -1 \bmod m_{1}m_{2}}
  {\sum\ \sum} c_{q}V_{3}\Bigl(\frac{qr }{m_{1}m_{2}}
  \Bigr) +O_{B} (\LL^{2B+2}) .
\end{equation} 
\par
This expression is close to the desired shape, but we must separate
the variables $m_1$, $m_2$, $q$ and $r$ before we can apply
Proposition~\ref{ThmBFI}.  We use the Mellin transform for this
purpose.  
\par
First, since $V_3$ is supported in $[M_3, 2M_3]$, the
variable $r$ satisfies
\begin{equation}\label{defR}
  R \ll r \ll R \text{ where } R=M_1 M_2 M_3 Q^{-1}.
\end{equation}
\par
We have
\begin{equation}\label{Mellin2}
  V_{3}(\xi) =\frac{1}{2\pi i}\, 
  \int_{(\sigma)} F_{3}(s)\,  \xi^{-s} ds,
\end{equation}
for any fixed real number $\sigma$, where
$$
F_{3}( s) =\int_{0}^\infty V_{3} (\xi) \xi^{s-1} d \xi
$$
is the Mellin transform of $V_3$. This is an entire function of
$s\in\Cc$ which satisfies
\begin{equation}\label{decayV3}
  F_{3} (\sigma +it) \ll_{k, \sigma} \vert t \vert^{-k}
  M_{3}^{\sigma } \LL^{kB},
\end{equation}
for all $k\geq 1$, all $\sigma\in\Rr$ and $|t|\geq1$ (as follows by
repeated integrations by parts).
\par
Let $\nu>0$ be a small parameter to be chosen later, and let 
\begin{equation*}
T=x^{\nu}
\end{equation*}
\par
Then, inserting \eqref{Mellin2} into \eqref{1360} and applying
\eqref{decayV3} for $k$ large enough depending on $\nu$, we deduce
that
\begin{multline*}
  \Sigma_{0} (Q,\uple{M},1) = \frac{1}{2\pi i} \int_{- i T}^{i T} F_3
  (it)\underset{m_{1}\ \ m_{2}}{\sum \ \ \sum}\bigr( {V_{1}(m_{1})}{
    m_{1}^{it}}\bigl)\cdot \bigl(
  {V_{2}(m_{2})}{m_{2}^{it}}\bigr)\\
  \times \underset{q\sim Q, \, r\atop qr \equiv -1 \bmod
    m_{1}m_{2}}{\sum\ \sum} \bigl( c_{q}q^{-it}\bigr)\cdot r^{-it}
  dt+O(\LL^{2B+2}),
\end{multline*}
where the implied constant depends on $\nu$ and $B$.
\par
For each $t$, we will apply Proposition~\ref{ThmBFI} with
\begin{gather*}
  (Q,R,N,M)\leftrightarrow(M_2,M_1,Q,R),\\
  \gamma_q=V_2(q)q^{it},\quad \delta_r=V_1(r)r^{it},\quad
  \beta_n=c_nn^{-it},\quad m=r,\quad f(m)=F_3(it)m^{-it}.
\end{gather*}
\par
To do this, we must check that the conditions~\eqref{crazycond} are
satisfied for these parameters. For a given $\eta>0$, using~(\ref{defR}),
these conditions translate to
$$
x^{1-\eta}\geq M_1M_2M_3 Q^{-1} \geq x^\eta \max \bigl\{M_2,
x^{-1} M_2 M_1^4, M_2^\frac{1}{2}M_1, x^{-2} M_2^3M_1^4 \bigr\}.
$$
\par
By the assumption \eqref{MN} and the inequality $Q>x^{12/23-\alpha}$,
we see that these inequalities hold as soon as we have
\begin{equation}\label{1461}
  Q \leq x^{-2\eta} \min \bigl\{ xM_2^{-1}, 
  x^2 M_1^{-4} M_2^{-1}, xM_1^{-1}M_2^{-\frac{1}{2}},  x^3M_1^{-4} M_2^{-3}\bigr\}. 
\end{equation}
\par
From $M_1M_2M_3 \leq x$ and $M_1\leq M_2 \leq M_3$, we know that
$M_{1}\leq (x/M_{3})^\frac{1}{2}$), and from this we obtain
\begin{gather*}
  M_2 \leq M_3,\\
  M_1^4 M_2\leq M_1^3 (x/M_3) \leq (x/M_3)^\frac{3}{2} (x/M_3)=
  x^\frac{5}{2} M_3^{-\frac{5}{2}},\\
  M_1M_2^\frac{1}{2} \leq M_1^\frac{1}{2} (x/M_3)^\frac{1}{2} \leq
  (x/M_3)^\frac{1}{4} (x/M_3)^\frac{1}{2}=x^\frac{3}{4} M_3^{-\frac{3}{4}},\\
  M_1^4M_2^3 \leq M_1(x/M_3)^3\leq (x/M_3)^\frac{1}{2} (x/M_3)^3 =
  x^\frac{7}{2} M_3^{-\frac{7}{2}}.
\end{gather*}
\par
Hence~\eqref{1461} is satisfied as soon as we have
$$
Q\ll x^{-2\eta} \min \bigl\{xM_3^{-1}, x^{-\frac{1}{2}}
M_3^\frac{5}{2}, x^\frac{1}{4} M_3^\frac{3}{4}, x^{-\frac{1}{2}
}M_3^\frac{7}{2} \bigr\},
$$
which simplifies into
\begin{equation}\label{Qll}
Q\ll 
x^{-2\eta} \min \bigl\{xM_3^{-1}, 
x^{-\frac{1}{2}} M_3^\frac{5}{2}, x^\frac{1}{4} M_3^\frac{3}{4}   
\bigr\},
\end{equation}
since we have $M_{3}> x^\frac{1}{3}\LL^{-\frac{B}{3}}$.  
\par
This holds by assumption in the setting of
Proposition~\ref{proponaver}, with $\eta$ replaced by $\eta/2$.  After
applying Proposition~\ref{ThmBFI} (noting that $|f(r)|\leq
|F_3(it)|\ll 1$ and $\sup_{r\sim R}{|f'(r)|}\ll T$) we derive
\begin{multline*}
\Sigma_{0}(Q,\uple{M},1)=
\frac{1}{2\pi i} \int_{- i T}^{i T} F_3 (it)\underset{m_{1}\ \
  m_{2}}
{\sum \ \ \sum}\frac{\bigr(  {V_{1}(m_{1})}{ m_{1}^{it}}\bigl)\cdot \bigl(
 {V_{2}(m_{2})}{m_{2}^{it}}\bigr)}{\varphi (m_1m_2)} \\
 \times \underset{q\sim Q, \, r\atop (qr , m_{1}m_{2})=1}{\sum\ \sum} \bigl( c_{q}q^{-it}\bigr)\cdot r^{-it}  dt 
 +O (x^{1-\eta_1+2\nu})
\end{multline*}
where $\eta_1>0$ depends on $\eta$.
\par
Using the Mellin inversion formula again, we then deduce
\begin{equation*}
  \Sigma_0( Q,\uple{M},1)= 
  \underset{m_{1}\ \ m_{2}}{\sum \ \ \sum} \,V_{1}(m_{1}) V_{2}(m_{2})  
  \underset{q\sim Q, \, r\atop (qr , m_1m_2) =1}{\sum\ \sum} 
  \frac{c_{q}}{\varphi (m_1m_2)} V_3 \Bigl( \frac{qr}{m_1m_2}\Bigr)
  + O (x^{1-\eta_1 +2\nu}).
\end{equation*}
\par
Next from Lemma \ref{coprimewithq} we get
$$
\sum_{r\atop (r,m_{1}m_{2})=1} V_{3}\Bigl( \frac{qr}{m_{1}m_{2}}\Bigr)
=\frac{\varphi (m_1m_2)}{q}\cdot \fourier{V}_{3}(0) + O\bigl( d (m_1
m_2) \LL^{2B}\bigr),
$$
and hence finally 
\begin{align}\label{Sigma0final}
  \Sigma_0(Q, \uple{M},1 ) &= \underset{m_1\ m_2}{\sum \ \ \sum}\
  V_{1}(m_{1}) V_{2}(m_{2}) \underset{q\sim Q \atop (q , m_1m_2) =1}
  {\sum} \frac{c_q}{q}\, \fourier{V}_3 (0)+O
  (x^{1-\eta_1+2\nu})+
  O \bigl( Q \LL^{2B+2}\bigr)\nonumber\\
  &= \underset{m_1\ m_2}{\sum \ \ \sum}\ V_{1}(m_{1}) V_{2}(m_{2})
  \underset{q\sim Q \atop (q , m_1m_2) =1} {\sum} \frac{c_q}{q}\,
  \fourier{V}_3 (0)+O (x^{1-\eta_1 +2\nu}),
\end{align}
(if we assume $\eta_1<1/4$, which we can certainly do).
\par
We are now almost done, but before performing the last steps, we will
generalize this formula to an arbitrary integer $a\not=0$. The reader
may skip the next section in a first reading.
  
\subsection{The case of general $a$}\label{generala} 

We will generalize \eqref{Sigma0final} in this section to the sum
$\Sigma_0 (Q,\uple{M},a)$ for a non--zero fixed integer $a$.
\par
For an arbitrary arithmetic function $f(m_{1}, m_{2})$ with bounded
support, we have the decomposition
\begin{equation*}
  \underset{m_1\ \ m_2}{\sum\sum} f(m_{1},m_{2})=
  \sum_{\delta \mid a} \ \sum_{\delta =\delta_1 \delta_2} 
  \sum_{\delta_1 \mid m_1\atop (\frac{m_1}{\delta_1} , \frac{a}{\delta_1})=1}
  \sum_{\delta_2 \mid m_2\atop (\frac{m_2}{\delta_2},\frac{ a}{\delta})=1 } f(m_{1},m_{2})
\end{equation*}
(put $\delta_1=(a,m_1)$, $\delta_2=(a/\delta_1,m_2)$. We apply this
formula to
$$
f(m_{1}, m_{2}) =\underset{q\sim Q, \, r\atop qr \equiv -a \bmod
  m_{1}m_{2}}{\sum\ \sum} c_{q}V_{3}\Bigl(\frac{qr
  +a}{m_{1}m_{2}}\Bigr).
$$
\par
Starting from the analogue of \eqref{(6.12)} for an arbitrary $a$, we
define $a'=a/\delta$, $m'_1= m_1/\delta_1,$ $ m'_2 =m_2/\delta_2$ and
$r'=r/\delta$ and split the congruence \eqref{cong} into $O_a (1)$
sums corresponding to the congruences $qr' \equiv a' \bmod m'_1 m'_2$,
where now we have $(m'_1m'_2, a')=1$ (recall also that $c_q=0$ when
$a$ and $q$ are not coprime). Hence proceeding as before, the formula
\eqref{Sigma0final} generalizes to
\begin{multline}\label{??end}
  \Sigma_0(Q, \uple{M},a ) = \sum_{\delta \mid a}
  \sum_{\delta=\delta_1 \delta_2} \sum_{(m'_1,a/\delta_1) =1}\
  V_{1}(\delta_1m'_{1}) \sum_{(m'_2, a/\delta)=1}V_{2}(\delta_2m'_{2})
  \underset{q\sim Q \atop (q , m'_1m'_2) =1}
  {\sum} \frac{c_q}{q}\, \fourier{V}_3 (0)\\
  +O (x^{1-\eta_1 +2\nu})
\end{multline}
for any fixed integer $a\not=0$.  When $a=1$, this formula becomes
simply~(\ref{Sigma0final}). We thus can continue with it in the
general case.

\subsection{End of the proof}

In~(\ref{??end}), we now exchange the order of the sums, and apply
Lemma \ref{coprimewithq} again to deal with the sums over $m'_1$
(coprime with $aq/\delta_1$) and $m'_2$ (coprime with $aq /\delta$).
By the assumption \eqref{MN} and the bound $M_1\leq M_2\leq M_{3}\leq
x^\frac{11}{23}$, the variables $M_{1}$ and $M_{2}$ are not too small:
we have
\begin{equation}\label{M2M1>}
M_{2}\geq M_{1}\geq \frac{x\mcL^{O(1)}}{M_2M_3}\geq x^\frac{1}{25}.
\end{equation}  
\par
Therefore we have
\begin{multline*}
  \Sigma_0 (Q,\uple{M}, a) = \fourier{V}_1(0) \fourier{V}_2(0)
  \fourier{V}_3(0) \sum_{q\sim Q} \frac{c_q}{q} \sum_{\delta \mid a} \
  \sum_{\delta= \delta_1 \delta_2} \Bigl( \frac{\varphi ((a/\delta_1)
    q)}{(a/\delta_1) q} \cdot \frac{1}{\delta_1}\Bigr) \Bigl(
  \frac{\varphi ((a/\delta ) q)}{(a/\delta ) q}
  \cdot \frac{1}{\delta_2}\Bigr)\\
  + O (x^{1-\eta_1 +2\nu}),
\end{multline*}
provided that (say) $\eta_1 \leq 1/1000$. The sum over $q$ is
restricted to moduli coprime with $a$, and hence writing $a=\delta_1
\delta_2 \delta_3$, we find that the main term of the above expression
is
$$
\fourier{V}_1(0) \fourier{V}_2(0) \fourier{V}_3(0) \sum_{q\sim Q}
\frac{c_q}{q}\cdot \Bigl( \frac {\varphi (q)} {q}\Bigr)^2\
\,\frac{1}{a}\, \underset{a =\delta_1 \delta_2 \delta_3}{\sum\ \sum \
  \sum} \frac{ \varphi (\delta_2 \delta_3) \varphi
  (\delta_3)}{\delta_2\delta_3}.  
$$
\par
Now, an elementary computation gives
$$
\underset{a =\delta_1 \delta_2 \delta_3}{\sum\ \sum \ \sum} \frac{
  \varphi (\delta_2 \delta_3) \varphi (\delta_3)}{\delta_2\delta_3} =
\sum_{d\mid a} \frac{\varphi (d)}{d}\sum_{\delta\mid d} \varphi (d)
=a,
$$
and therefore we get finally
\begin{equation}\label{rty}
  \Sigma_0 (Q,\uple{M}, a) = \fourier{V}_1(0) \fourier{V}_2(0) 
  \fourier{V}_3(0) \sum_{q\sim Q} \frac{c_q}{q}\cdot 
  \Bigl( \frac {\varphi (q)} {q}\Bigr)^2 + 
  O (x^{1-\eta_1 +2\nu}).
\end{equation}
\par
Now gather \eqref{1341}, \eqref{rty} and Lemma \ref{6.4}. The main
terms disappear, and therefore
\begin{equation*} 
  \Sigma (Q,\uple{M}, a) = O (x^{1-\eta_1 +2\nu}),
\end{equation*}
by \eqref{M2M1>}, provided that \eqref{Qll} is satisfied. Now picking
$\nu$ small enough, we obtain Proposition \ref{proponaver}, which
completes the proof of Theorem~\ref{onaverage}.

\end{document}